\newtheorem{thm}{Theorem}[section]
\newtheorem{rem}[thm]{Remark}
\newtheorem{prop}[thm]{Proposition}
\newtheorem{cor}[thm]{Corollary}
\newdefinition{defn}[thm]{Definition}
\journal{}
\begin{document}
	\begin{frontmatter}
	
		\title{ Robustness of infinite frames and Besselian structures }
		
		\author{Shankhadeep Mondal} \ead{shankhadeep.mondal@ucf.edu}

        \author{Geetika Verma} \ead{geetika.verma@rmit.edu.au}
		
        \author{Ram Narayan Mohapatra}\ead{Ram.Mohapatra@ucf.edu}

		\address{School of Mathematics, University of Central Florida, Orlando, Florida-32816}
        \address{School of Science, Royal Melbourne Institute of Technology, Melbourne VIC 3000, Australia}
        \address{School of Mathematics, University of Central Florida, Orlando, Florida-32816}

		\begin{abstract}
			This paper extends the concepts of Minimal Redundancy Condition (MRC) and robustness of erasures for infinite frames in Hilbert spaces. We begin by establishing a comprehensive \break framework for the MRC, emphasizing its importance in ensuring the stability and resilience of frames under finite erasures. Furthermore, we discussed the robustness of erasures, which generalizes the ability of a frame to withstand information loss. The relationship between robustness, MRC, and excess of a frame is carefully examined, providing new insights into the interplay between these properties. The robustness of Besselian frames, highlighting their potential in applications where erasure resilience is critical. Our results contribute to a deeper understanding of frame theory and its role in addressing challenges posed by erasure recovery.
		\end{abstract}

		\begin{keyword}
			Erasures, Frames, Reconstruction, Robustness, MRC
			\MSC[2020] 42C15, 47B02, 94A12, 47A05, 15A24
		\end{keyword}
		
	\end{frontmatter}
	
	\section{Introduction}
	
The use of frames in data transmission has gained prominence due to their inherent redundancy properties, which play a critical role in ensuring the integrity and precision of \break transmitted data. During the transmission process, various adverse global network conditions, such as congestion and limitations of the transmission channel, can cause errors. These errors may lead to the loss of certain parts of the encoded coefficients of a signal, resulting in what is known as erasures. The presence of redundancy within frames helps counteract these erasures and enhances the accuracy of the reconstructed signal.

    In recent years, significant research efforts have been focused on addressing the problem of erasures by applying frame theory. The concept of Minimal Redundancy Condition (MRC) plays a pivotal role in understanding and analyzing the robustness and stability of frames in signal processing and related applications. In essence, if a finite subset $\Lambda$ satisfies the MRC for a frame $\{ f_n\}_{n=1}^\infty$ in a Hilbert space $\mathcal{H},$ the remaining elements of the frame $\{ f_n\}_{n \notin \Lambda}$ continue to form a valid frame for $\mathcal{H}$. This property is central to many reconstruction and erasure recovery methods discussed in the literature. Approaches include the recovery of lost coefficients via techniques such as bridging methods \cite{larson} or erasure recovery matrices \cite{han,han1}, the inversion of partial reconstruction operators determined by subsets of frame coefficients \cite{larson}, and constructing dual frames for the reduced frame after erasures \cite{aram}. Computational efficiency in high-dimensional spaces further motivates the development of methods that focus on the erasure set, reducing the problem dimension while maintaining effective reconstruction \cite{aram,larson}.  One notable study by Casazza and Kovačević \cite{casa2} has delved into equal norm tight frames, exploring their properties, construction, and robustness to erasures in detail. Their work has provided a comprehensive understanding of how these frames can withstand erasures and maintain signal integrity. Another important contribution is from Goyal, Kovačević, and Kelner \cite{goya}, who examined uniform tight frames from the perspective of coding theory. They identified that these frames are optimal for one erasure, providing a solid foundation for further exploration in this area. Optimal dual frames are essential in minimizing reconstruction error introduced by erasures. Optimal dual frames, which minimize reconstruction error from preserved coefficients, have also been studied extensively \cite{holm,jins,jerr, sali, peh, dev, deep, shan2}. A new measure averaging error over all possible locations was introduced in \cite{bodm1}, which we adopt in this work, though our study considers all frame and dual frame pairs rather than limiting to Parseval frames and their canonical duals. Additionally, \cite{leng3} utilized weighted error operators to address probabilistic erasure scenarios, and \cite{li1} provided simplified conditions for dual frame optimality. The characterization of optimal dual frames among Parseval frames \cite{leng, li} and for general cases \cite{shan} further enriches this area. A \break classification of frames robust to a fixed number of erasures studied in\cite{casa2}. Notably, full spark frames, where any subset of indices up to a certain size satisfies the MRC, underscore the interplay between redundancy and robustness, offering powerful tools for applications requiring resilience to data loss \cite{alex,aram1}.

Building upon the concept of MRC, robustness against erasures highlights the structural resilience of frames in handling the loss of frame elements. A frame $F= \{f_i\}_{i=1}^\infty$ is said to be $m-$erasure robust if the removal of any $m$ elements leaves the remaining set as a frame for the Hilbert space. This property not only ensures stability in the reconstruction process but also establishes a direct link with the minimal redundancy condition (MRC), as both focus on maintaining sufficient information for spanning the space \cite{larson, aram2}. Moreover, the excess of a frame is an intrinsic measure of its redundancy and robustness. Frames with excess $m-$inherently satisfy $m-$erasure robustness since the removal of any $m$ elements still results in a frame \cite{balan}. Our results further this perspective by investigating how $m-$erasure robustness relates to the linear independence of associated matrices, extends to projections of frames, and applies to Besselian frames, thereby offering a comprehensive understanding of robustness in diverse infinite frame structures.

The structure of this paper is organized as follows. Section 2 provides the necessary \break preliminaries on frames and erasures, setting the groundwork for the subsequent discussions. In Section 3, we introduce the concept of the Minimal Redundancy Condition (MRC) for infinite frames, offering insights into its role in the theory of erasures. Section 4 extends the discussion to a more general framework by exploring the robustness of erasures. Here, we establish connections between the notions of robustness, MRC, and the excess of a frame, providing a unified perspective. Additionally, we delve into the robustness properties of Besselian frames, further enriching the understanding of erasure recovery in this context.
	
	\section{Preliminaries }

Let $\mathcal{H}$  be an Hilbert space.  A finite sequence $F= \{f_i\}_{i\in \mathcal{I}}$ of vectors  in $\mathcal{H}$ is called a \textit{frame} for $\mathcal{H}$ if there exist constants $A,B >0$ such that
	$$\displaystyle{A \left\| f \right\|^2\leq \sum_{i\in \mathcal{I}}\big|\langle f,f_i\rangle\big|^2 \leq B\left\| f \right\|^2}, \,\text{for all}\; f\in \mathcal{H}.$$
	
	\noindent
	The constants $A$ and $B$ are called lower and upper frame bounds respectively.  The  \textit{optimal lower frame bound} is the supremum over all lower frame bounds and the \textit{optimal upper frame bound} is the infimum over all upper frame bounds. If $A=B,$ i.e.  $\displaystyle{\sum_{i\in \mathcal{I}}\big| \langle f,f_i \rangle \big|^2 = A\left\| f \right\|^2},$ for all $f\in \mathcal{H},$  then $\{f_i\}_{i=1}^N$ is called a \textit{tight frame}.
	If $A=B=1,$ then $\{f_i\}_{i=1}^N$  is called a \textit{Parseval frame}. A frame $F$ is said to be \textit{uniform frame} if norm of each frame vector are same and if additionally $\|f_i\| = 1$ for every $i$, then $F$ is called \textit{unit norm} frame. Given a frame $F$ for a Hilbert space $\mathcal{H}$, the linear operator $\Theta_F: \mathcal{H} \to \mathbb{C}^{|\mathcal{I}|}$ defined by  
\[
\Theta_F(f) = \{\langle f, f_i \rangle\}_{i \in \mathcal{I}}
\]  
is known as the \textit{analysis operator} associated with $F$. The adjoint operator $\Theta_F^*: \mathbb{C}^{|\mathcal{I}|} \to \mathcal{H}$, given by  
\[
\Theta_F^*\bigg(\{c_i\}_{i \in \mathcal{I}}\bigg) = \sum_{i \in \mathcal{I}} c_i f_i,
\]  
is referred to as the \textit{synthesis operator} or the \textit{preframe operator} corresponding to $F$.

	The operator $S_F : \mathcal{H}\to \mathcal{H} $  defined by
	$$S_{F}f= \Theta_{F}^{*} \Theta_{F} f = \sum_{i\in \mathcal{I}} \langle f,f_i \rangle f_i $$
	is called the \textit{frame operator} associated with $F.$
	It is well known that $S_F$ is a positive, self-adjoint and invertible operator on $\mathcal{H}.$	A frame $G=\{g_i\}_{i \in \mathcal{I}}$ for $\mathcal{H}$ is called a dual frame of $F= \{f_i\}_{i \in \mathcal{I}}$ if every element $f \in  \mathcal{H}$ can be written as
	\begin{align} \label{eqn2point1reconstruction}
		f = \sum_{i \in \mathcal{I}}\langle f,f_i \rangle g_i,  \;\;   \forall f\in \mathcal{H}.
	\end{align}
	This implies that $\Theta_{G}^* \Theta_{F} = I$ and hence, $\Theta_{F}^* \Theta_{G} = I.$ So, we also have $	f = \sum\limits_{i \in \mathcal{I}}\langle f,g_i \rangle f_i,  \;\;   \forall f\in \mathcal{H}.$ In other words, $F$ is a dual of $G$ if and only if $G$ is a dual of $F.$  It is known that for any frame $F,$ the frame $\{S_{F}^{-1}f_i\}_{i \in \mathcal{I}}$ is  a dual frame of $F$ and is called the canonical or standard dual frame of $F$. In fact, it is the only dual for a frame $F,$ when $F$ is a basis.  On the other hand, if $F$ is not a basis, then  there exist infinitely many dual frames  $G$ for $F$ and every dual frame $G=\{g_i\}_{i \in \mathcal{I}}$ of F is of the form $G=\{S_{F}^{-1}f_i +u_i\}_{i \in \mathcal{I}},$ where the sequence $\{u_i\}_{i \in \mathcal{I}}$ satisfies, $$\sum_{i \in \mathcal{I}} \langle f,u_i \rangle f_i = \sum_{i \in \mathcal{I}} \langle f,f_i \rangle u_i  = 0, \;\;   \forall f\in \mathcal{H}.$$

    Given dual pair $(F,G)$ and $(F',G'),$ We define them as equivalent if there exists a unitary operator $U$ (orthogonal in the real case) such that $(F',G') = (UF,UG)$.	For a detailed study on frames, we refer to \cite{ole}. \\
	Another important operator associated with a frame is the \textit{Gramian operator}. For a frame \break $F= \{f_i\}_{i=1}^N,$ the Gramian operator $\mathcal{G}: \mathbb{C}^N \to  \mathbb{C}^N $ is defined as $\mathcal{G} = \Theta_{F} \Theta_{F}^*.$ 
    The matrix representation of the Gramian of a frame $F= \{f_i\}_{i=1}^N$ is called the \textit{Gramian matrix} and is defined by
    \begin{center}
        
  $\mathcal{G}= \begin{pmatrix}
		\langle f_1,f_1 \rangle & \langle f_2,f_1 \rangle  & \cdots  &\langle f_N,f_1 \rangle\\
		\langle f_1,f_2 \rangle & \langle f_2,f_2 \rangle  & \cdots  &\langle f_N,f_1 \rangle \\
		\vdots &\vdots  &\vdots &\vdots  \\
		\langle f_1,f_N \rangle & \langle f_2,f_N \rangle  & \cdots  &  \langle f_N,f_N \rangle \\
		
	\end{pmatrix}.$
	
	\noindent
	
\end{center}
A frame $F=\{f_i\}_{i \in \mathcal{I}}$ is called a \textit{Besselian frame} if for any sequence $\{a_i\}_{i \in \mathcal{I}},$ $\sum\limits_{i \in \mathcal{I}}a_i f_i$ converges, then $\{a_i\}_{i \in \mathcal{I}}\in \ell^2.$

\section{MRC condition on infinite frames}
The Minimal Redundancy Condition (MRC) is a critical aspect of frame theory, particularly in analyzing the robustness of erasure in infinite frames. MRC ensures that a frame possesses the least amount of redundancy necessary to maintain its reconstruction properties. This condition is particularly valuable in applications requiring resilience to data loss, as it allows for optimal balance between robustness and efficiency. In the context of infinite frames, MRC plays a significant role in characterizing how subsets of frame elements contribute to stability under erasure and whether the frame maintains its spanning properties. In the infinite frame setting, MRC involves analyzing the properties of localized finite sections and their contributions to global robustness. For instance, given an infinite frame, if a finite subset of indices $\Lambda$ is removed, the remaining sequence still satisfies a modified frame inequality with adjusted bounds. \break Investigating this behavior under the MRC sheds light on the stability of the frame operator $S_F=\sum\limits_{i \in \mathcal{I}} \langle f,f_i \rangle f_i$ and its ability to retain invertibility when restricted to subspaces. Studying the robustness of erasure under the MRC offers valuable insights into designing frames with optimal efficiency, particularly in signal processing, where erasures or losses are inevitable. It also provides a pathway for understanding the interplay between frame redundancy and stability, ensuring a balance between these properties in infinite-dimensional spaces. Investigating the robustness of erasure in the MRC setting provides a deeper understanding of how frames behave under perturbations, offering insights into designing systems that are both efficient and resilient in various spaces.
  
Let $F= \{f_i\}_{i=1}^\infty$ be a frame for $\mathcal{H}.$ $F$ satisfy the \textit{minimal redundancy condition (MRC in short)} for a set $\Lambda \subset \{1,2, \ldots \infty\}$ if $\overline{\text{span}}\{f_i : i \in \Lambda^c\} = \mathcal{H}.$ In other words $\{f_i : i \in \Lambda^c\}$ is a frame for $\mathcal{H}.$ For a dual frame pair \((F, G)\), if the erasure set \(\Lambda\) satisfies the minimal redundancy condition, then \(\{g_j : j \notin \Lambda\}\) forms a frame for \(\mathcal{H}\). Consequently, there exists at least one dual frame (possibly many) that can reconstruct any \(f \in \mathcal{H}\) from the coefficients indexed by \(j \notin \Lambda\). This guarantees that sufficient information is retained in \(\{\langle f, g_j \rangle: j \notin \Lambda\}\) for accurate reconstruction. However, if the erasure set \(\Lambda\) fails to satisfy the minimal redundancy condition, there exists a nonzero vector \(f \in \mathcal{H}\) that is orthogonal to all \(g_j\) for \(j \notin \Lambda\). In such cases, it becomes impossible to reconstruct \(f\) using only the coefficients indexed by \(j \notin \Lambda\). In other words, the partial reconstruction operator $R_\Lambda(f) = \sum\limits_{i \in {\Lambda^c}} \langle f, g_i \rangle f_i $ need not be invertible. This underscores the "minimal" nature of the redundancy condition, as it delineates the boundary between reconstructability and information loss.

\noindent The following theorem ensures that if a subset of frame elements indexed by $\Lambda^{c}$ is sufficient to reconstruct elements of $\mathcal{H}$ using $F$, the same holds for the canonical dual frame $S_{F}^{-1}F.$ This symmetry is vital because the canonical dual frame plays a central role in the reconstruction of signals, particularly through the dual reconstruction formula.

\begin{thm}\label{thm3point1}
Let $F= \{f_i\}_{i=1}^\infty$ be a frame for $\mathcal{H}.$  Any finite set $\Lambda \subset \{1,2,\ldots \infty\}$ satisfy the MRC for $F$ if and only if $\Lambda$ satisfy the MRC for the canonical dual $S_{F}^{-1}F.$ 
\end{thm}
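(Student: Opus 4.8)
The plan is to reduce everything to the elementary fact that the minimal redundancy condition for a finite set $\Lambda$ is equivalent to the closed span of $\{f_i : i \in \Lambda^c\}$ being all of $\mathcal{H}$, and then to observe that applying a bounded invertible operator to a frame preserves closed spans. Concretely, I would first recall that for a frame $F = \{f_i\}_{i=1}^\infty$ the canonical dual is $S_F^{-1}F = \{S_F^{-1} f_i\}_{i=1}^\infty$, where $S_F$ is the (bounded, positive, self-adjoint, invertible) frame operator. The key structural input is: if $T : \mathcal{H} \to \mathcal{H}$ is bounded and invertible, then for any index set $J$ one has $\overline{\operatorname{span}}\{T f_i : i \in J\} = T\big(\overline{\operatorname{span}}\{f_i : i \in J\}\big)$. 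This is because $T$ maps $\operatorname{span}\{f_i : i \in J\}$ onto $\operatorname{span}\{T f_i : i \in J\}$, and a bounded invertible operator is a homeomorphism, hence carries closures to closures.

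Next I would apply this with $T = S_F^{-1}$ and $J = \Lambda^c$. Suppose $\Lambda$ satisfies the MRC for $F$, i.e. $\overline{\operatorname{span}}\{f_i : i \in \Lambda^c\} = \mathcal{H}$. Then $\overline{\operatorname{span}}\{S_F^{-1} f_i : i \in \Lambda^c\} = S_F^{-1}(\mathcal{H}) = \mathcal{H}$, so $\Lambda$ satisfies the MRC for $S_F^{-1}F$. For the converse, I would note that the canonical dual of the canonical dual is the original frame: since $S_{S_F^{-1}F} = S_F^{-1}$, we get $S_{S_F^{-1}F}^{-1}(S_F^{-1}F) = S_F(S_F^{-1}F) = F$. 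Hence the forward implication, applied to the frame $S_F^{-1}F$ in place of $F$, immediately gives the reverse implication. Alternatively, and just as cleanly, one can run the same span argument with $T = S_F$ directly.

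One small point that deserves a line of care: the problem statement phrases the MRC as "$\{f_i : i \in \Lambda^c\}$ is a frame for $\mathcal{H}$", and I should make sure the density condition and the frame condition are genuinely interchangeable here. Since $\Lambda$ is finite, $\{f_i : i \in \Lambda^c\}$ is obtained from a Bessel sequence by deleting finitely many vectors, so it is still Bessel; thus it is a frame precisely when it is complete, i.e. when $\overline{\operatorname{span}}\{f_i : i \in \Lambda^c\} = \mathcal{H}$. The same remark applies to the canonical dual (which is also a frame, hence Bessel, hence remains Bessel after finite deletion). So the two formulations of MRC coincide, and the span argument above suffices.

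I do not anticipate a real obstacle: the only thing to be vigilant about is not conflating "$S_F^{-1}$ applied to the span" with "the span of the $S_F^{-1}$-images" without invoking continuity and invertibility, and remembering that finiteness of $\Lambda$ is what keeps the reduced systems Bessel. The proof is essentially a two-line consequence of the homeomorphism property of $S_F^{-1}$ together with the self-duality of the canonical-dual construction.
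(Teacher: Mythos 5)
Your main argument is correct but follows a genuinely different route from the paper. The paper works at the level of the reduced frame operator: it characterizes the MRC for $\Lambda$ via invertibility of ${\Theta_F}_{\Lambda^c}^*{\Theta_F}_{\Lambda^c}$ and then verifies the operator identity ${\Theta_{S_{F}^{-1}F}}_{\Lambda^c}^*{\Theta_{S_{F}^{-1}F}}_{\Lambda^c} = S_F^{-1}\,{\Theta_F}_{\Lambda^c}^*{\Theta_F}_{\Lambda^c}\,S_F^{-1}$, so that invertibility of one side is equivalent to invertibility of the other. You instead work directly with the paper's primary definition of the MRC as density of the span, using only that $S_F^{-1}$ is a homeomorphism of $\mathcal{H}$ carrying $\overline{\operatorname{span}}\{f_i : i\in\Lambda^c\}$ onto $\overline{\operatorname{span}}\{S_F^{-1}f_i : i\in\Lambda^c\}$. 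This is more elementary, does not need $\Lambda$ to be finite, and, as you observe, applies verbatim to any bounded invertible operator in place of $S_F^{-1}$, so it also subsumes Theorem 3.2. Both arguments are sound.

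One caveat concerns your bridging remark that the reduced system ``is still Bessel; thus it is a frame precisely when it is complete.'' That inference is false in infinite dimensions: a complete Bessel sequence need not be a frame (take $\{e_n/n\}$ for an orthonormal basis $\{e_n\}$). The equivalence of ``complete'' and ``frame'' for $\{f_i\}_{i\in\Lambda^c}$ is nevertheless true, but it rests on the standard dichotomy for deleting a single element from a frame (the remaining vectors either form a frame or are incomplete), iterated finitely many times; this is precisely where finiteness of $\Lambda$ enters, not through the Bessel property. Since the paper takes the span condition as the definition of the MRC, this does not undermine your main proof, but the justification of that side remark should be replaced.
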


\begin{proof}
Without loss of generality let us assume $\Lambda = \{1,2,\ldots,k\}.$ Let $\Theta_{S_{F}^{-1}F}$ be the analysis operator corresponding to $S_{F}^{-1}F.$ $\Lambda$ satisfy the MRC for $F$ if and only if $\{f_i\}_{i\in \Lambda^c}$ is a frame for $\mathcal{H}$ if and only if ${\Theta_F}_{\Lambda^c}^* {\Theta_F}_{\Lambda^c}$ is invertible, where $F_{\Lambda^c} = \{f_i\}_{i \in \Lambda^c}$. Now,
\begin{align*}
    {\Theta_{S_{F}^{-1}F}}_{\Lambda^c}^*  {\Theta_{S_{F}^{-1}F}}_{\Lambda^c} f&= \sum\limits_{i \in \Lambda^c} \langle f,  S_{F}^{-1}f_i \rangle S_{F}^{-1}f_i \\&= S_{F}^{-1}\left( \sum\limits_{i \in \Lambda^c} \langle  S_{F}^{-1}f, f_i \rangle f_i \right) \\&= S_{F}^{-1} {\Theta_F}_{\Lambda^c}^* {\Theta_F}_{\Lambda^c} S_{F}^{-1}f,
\end{align*}
for all $f \in \mathcal{H}.$ Thus, $ {\Theta_{S_{F}^{-1}F}}_{\Lambda^c}^*  {\Theta_{S_{F}^{-1}F}}_{\Lambda^c} =  S_{F}^{-1} {\Theta_F}_{\Lambda^c}^* {\Theta_F}_{\Lambda^c} S_{F}^{-1}.$ Therefore, ${\Theta_{S_{F}^{-1}F}}_{\Lambda^c}^*  {\Theta_{S_{F}^{-1}F}}_{\Lambda^c} $ is invertible if and only if  $S_{F}^{-1} {\Theta_F}_{\Lambda^c}^* {\Theta_F}_{\Lambda^c} S_{F}^{-1}$ is invertible if and only if ${\Theta_F}_{\Lambda^c}^* {\Theta_F}_{\Lambda^c}$ is invertible. Hence, $\Lambda$ satisfy MRC for $S_{F}^{-1}F$ if and only if $\Lambda$ satisfy MRC for $F.$

\end{proof}
\noindent For any unitary operator $U$ on $\mathcal{H}$, it is straightforward to see that  ${\Theta_{UF}}_{\Lambda^c}^*  {\Theta_{UF}}_{\Lambda^c} = U {\Theta_F}_{\Lambda^c}^* {\Theta_F}_{\Lambda^c} U^*.$
Thus, using a similar argument as in \ref{thm3point1}, we conclude the following theorem.

\begin{thm}\label{thm3point2}
    Let $F= \{f_i\}_{i=1}^\infty$ be a frame for $\mathcal{H}$, and let $U$ be a unitary operator on $\mathcal{H}$. A finite subset $\Lambda \subset \{1,2,\ldots, \infty\}$ satisfies the MRC for $F$ if and only if $\Lambda$ satisfies the MRC for $UF$.
\end{thm}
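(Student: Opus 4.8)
The plan is to follow the template of the proof of Theorem \ref{thm3point1}, with the unitary $U$ playing the role previously played by $S_F^{-1}$. As there, there is no loss of generality in assuming $\Lambda = \{1,2,\ldots,k\}$, so that $\Lambda^c$ is cofinite and we must compare the truncated systems $F_{\Lambda^c} = \{f_i\}_{i\in\Lambda^c}$ and $(UF)_{\Lambda^c} = \{Uf_i\}_{i\in\Lambda^c}$. Recall that $\Lambda$ satisfies the MRC for $F$ precisely when $F_{\Lambda^c}$ is a frame for $\mathcal{H}$; since the upper (Bessel) bound of $F_{\Lambda^c}$ is automatically inherited from $F$, this is equivalent to the invertibility of the partial frame operator ${\Theta_F}_{\Lambda^c}^*{\Theta_F}_{\Lambda^c}$ on $\mathcal{H}$ (and likewise for $UF$).

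Next I would record the single computation that drives everything. For every $f\in\mathcal{H}$,
\begin{align*}
{\Theta_{UF}}_{\Lambda^c}^*{\Theta_{UF}}_{\Lambda^c} f &= \sum_{i\in\Lambda^c}\langle f, Uf_i\rangle\, Uf_i \\ &= U\sum_{i\in\Lambda^c}\langle U^* f, f_i\rangle\, f_i = U\,{\Theta_F}_{\Lambda^c}^*{\Theta_F}_{\Lambda^c}\,U^* f,
\end{align*}
where one uses only the adjoint relation $\langle f, Uf_i\rangle = \langle U^* f, f_i\rangle$ together with the boundedness and linearity of $U$ to pull it out of the norm-convergent series. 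Hence ${\Theta_{UF}}_{\Lambda^c}^*{\Theta_{UF}}_{\Lambda^c} = U\,{\Theta_F}_{\Lambda^c}^*{\Theta_F}_{\Lambda^c}\,U^*$, which is exactly the identity flagged in the remark preceding the statement.

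Finally, since $U$ is unitary and therefore invertible with bounded inverse $U^* = U^{-1}$, conjugation by $U$ preserves invertibility: $U\,{\Theta_F}_{\Lambda^c}^*{\Theta_F}_{\Lambda^c}\,U^*$ is invertible on $\mathcal{H}$ if and only if ${\Theta_F}_{\Lambda^c}^*{\Theta_F}_{\Lambda^c}$ is (indeed $\bigl(UAU^*\bigr)^{-1} = UA^{-1}U^*$ whenever $A$ is invertible). Chaining the equivalences — $\Lambda$ satisfies the MRC for $UF$ $\iff$ $(UF)_{\Lambda^c}$ is a frame $\iff$ ${\Theta_{UF}}_{\Lambda^c}^*{\Theta_{UF}}_{\Lambda^c}$ is invertible $\iff$ ${\Theta_F}_{\Lambda^c}^*{\Theta_F}_{\Lambda^c}$ is invertible $\iff$ $\Lambda$ satisfies the MRC for $F$ — completes the proof. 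I do not expect a genuine obstacle here; the only points needing a bit of care are justifying that $U$ may be moved through the infinite sum (convergence of the frame series plus continuity of $U$) and noting that the upper bound for the truncated systems is automatic, so that the MRC genuinely reduces to invertibility of the partial frame operator rather than to a two-sided estimate.
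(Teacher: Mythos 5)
Your proposal is correct and follows exactly the route the paper intends: you derive the conjugation identity ${\Theta_{UF}}_{\Lambda^c}^*{\Theta_{UF}}_{\Lambda^c} = U\,{\Theta_F}_{\Lambda^c}^*{\Theta_F}_{\Lambda^c}\,U^*$ (which the paper records in the remark immediately preceding the theorem) and then transfers invertibility through conjugation by the unitary, mirroring the argument of Theorem \ref{thm3point1}. Your added remarks on pulling $U$ through the convergent series and on the Bessel bound being inherited are sound clarifications, not deviations.
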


\noindent The following proposition extends the understanding of the Minimal Redundancy Condition (MRC) from a frame \( F = \{f_i\}_{i=1}^\infty \) to its duals \( G = \{S_{F}^{-1}f_i + h_i\}_{i=1}^\infty \). It shows that if a finite set of indices \( \Lambda \) satisfies the MRC for \( F \), it also satisfies the MRC for any dual \( G \), provided a specific operator condition involving \( \Theta_F \) and the perturbation \( h \) is met.

\begin{prop}
    Let $F= \{f_i\}_{i=1}^\infty$ be a frame for $\mathcal{H}.$  Suppose  $\Lambda = \{n_1,n_2,\ldots,n_k\}$ be a finite set of indices, satisfy the MRC for $F.$ Then $\Lambda$ satisfy the MRC for a dual $G=\left\{S_{F}^{-1}f_i + h_i    \right\}_{i=1}^\infty$ of $F$ if and only if $$\left( (I - (\Theta_{F}^* \Theta_{F})^{-1}{\Theta_{F}^*}_\Lambda {\Theta_{F}}_\Lambda )(\Theta_{F}^* \Theta_{F})^{-1} - {\Theta_{h}}_\Lambda^* {\Theta_{F}}_\Lambda(\Theta_{F}^* \Theta_{F})^{-1} - (\Theta_{F}^* \Theta_{F})^{-1}{\Theta_{F}^*}_\Lambda {\Theta_{h}}_\Lambda + {\Theta_{h}}_{\Lambda^c}^*{\Theta_{h}}_{\Lambda^c}  \right)$$ is invertible, where $h= \{h_i\}_{i=1}^\infty, F_\Lambda = \{f_i\}_{i \in \Lambda}, F_{\Lambda^c} = \{f_i\}_{i \in \Lambda^c}, h_\Lambda = \{h_i\}_{i \in \Lambda} \text{and}\; h_{\Lambda^c} = \{h_i\}_{i \in \Lambda^c}.$
\end{prop}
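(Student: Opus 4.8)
The plan is to convert the MRC for $G$ into an operator‑invertibility statement and then expand that operator explicitly, following the pattern of the proof of Theorem \ref{thm3point1} but now dragging the perturbation $h$ through the computation. First I would record the reduction that does all the work: because $G$, being a dual of $F$, is itself a frame, every subfamily $\{g_i\}_{i\in\Lambda^c}$ is automatically Bessel, so $T:={\Theta_G}_{\Lambda^c}^*{\Theta_G}_{\Lambda^c}$ is a bounded positive self‑adjoint operator on $\mathcal H$ with $\langle Tf,f\rangle=\sum_{i\in\Lambda^c}|\langle f,g_i\rangle|^2$. Hence $\Lambda$ satisfies the MRC for $G$, i.e. $\{g_i\}_{i\in\Lambda^c}$ is a frame, precisely when $T$ is bounded below, and for a positive self‑adjoint operator being bounded below is equivalent to being invertible. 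So it suffices to identify $T$ with the operator displayed in the statement.

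Next I would expand $T$ by substituting $g_i=S_F^{-1}f_i+h_i$ and using $S_F=\Theta_F^*\Theta_F$:
\begin{align*}
Tf&=\sum_{i\in\Lambda^c}\langle f,\,S_F^{-1}f_i+h_i\rangle\,(S_F^{-1}f_i+h_i)\\
&=\sum_{i\in\Lambda^c}\langle f,S_F^{-1}f_i\rangle S_F^{-1}f_i+\sum_{i\in\Lambda^c}\langle f,S_F^{-1}f_i\rangle h_i+\sum_{i\in\Lambda^c}\langle f,h_i\rangle S_F^{-1}f_i+\sum_{i\in\Lambda^c}\langle f,h_i\rangle h_i,
\end{align*}
and, using self‑adjointness of $S_F^{-1}$ to pull it outside each sum, rewrite these four pieces as $S_F^{-1}{\Theta_F}_{\Lambda^c}^*{\Theta_F}_{\Lambda^c}S_F^{-1}$, $\;{\Theta_h}_{\Lambda^c}^*{\Theta_F}_{\Lambda^c}S_F^{-1}$, $\;S_F^{-1}{\Theta_F}_{\Lambda^c}^*{\Theta_h}_{\Lambda^c}$, and $\;{\Theta_h}_{\Lambda^c}^*{\Theta_h}_{\Lambda^c}$.

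The last ingredient is to turn every restriction to $\Lambda^c$ into a restriction to the finite set $\Lambda$. For the first piece I would use the orthogonal‑type splitting ${\Theta_F}_{\Lambda^c}^*{\Theta_F}_{\Lambda^c}=\Theta_F^*\Theta_F-{\Theta_F^*}_\Lambda{\Theta_F}_\Lambda=S_F-{\Theta_F^*}_\Lambda{\Theta_F}_\Lambda$ together with $S_F^{-1}S_FS_F^{-1}=S_F^{-1}=(\Theta_F^*\Theta_F)^{-1}$, which produces exactly $\big(I-(\Theta_F^*\Theta_F)^{-1}{\Theta_F^*}_\Lambda{\Theta_F}_\Lambda\big)(\Theta_F^*\Theta_F)^{-1}$. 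For the two cross terms the decisive input is the defining identity of a dual perturbation: since $G$ is a dual of $F$, the sequence $h$ satisfies $\Theta_F^*\Theta_h=0$ and $\Theta_h^*\Theta_F=0$, whence ${\Theta_F}_{\Lambda^c}^*{\Theta_h}_{\Lambda^c}=-{\Theta_F^*}_\Lambda{\Theta_h}_\Lambda$ and ${\Theta_h}_{\Lambda^c}^*{\Theta_F}_{\Lambda^c}=-{\Theta_h}_\Lambda^*{\Theta_F}_\Lambda$; substituting collapses the middle two pieces to $-{\Theta_h}_\Lambda^*{\Theta_F}_\Lambda(\Theta_F^*\Theta_F)^{-1}$ and $-(\Theta_F^*\Theta_F)^{-1}{\Theta_F^*}_\Lambda{\Theta_h}_\Lambda$. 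Adding the four terms gives precisely the operator in the statement, and combined with the reduction of the first paragraph this finishes the proof.

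I do not expect a serious obstacle here: there is no analytic difficulty, because Bessel‑ness of every subfamily of $G$ is free and the ``bounded below $\Leftrightarrow$ invertible'' dichotomy for positive operators is standard. The only thing to be careful about is the bookkeeping — matching each adjoint with the correct analysis operator, tracking which index set each $\Theta$ is restricted to, and invoking $\Theta_F^*\Theta_h=0$ (and its adjoint) in exactly the two spots where the cross terms must be rewritten; it is that single algebraic identity that makes the $\Lambda^c$‑sums in the cross terms fold down onto the negatives of the corresponding $\Lambda$‑sums.
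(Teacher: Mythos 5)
Your proposal is correct and follows essentially the same route as the paper's proof: expand ${\Theta_G}_{\Lambda^c}^*{\Theta_G}_{\Lambda^c}$ into four terms, use ${\Theta_F}_{\Lambda^c}^*{\Theta_F}_{\Lambda^c}=S_F-{\Theta_F}_\Lambda^*{\Theta_F}_\Lambda$ and the duality identity $\Theta_F^*\Theta_h=0$ to fold the $\Lambda^c$-cross-terms onto $-{\Theta_F}_\Lambda^*{\Theta_h}_\Lambda$ and $-{\Theta_h}_\Lambda^*{\Theta_F}_\Lambda$, then invoke the equivalence of MRC with invertibility of the restricted frame operator. You merely make explicit two steps the paper leaves implicit (the positive-operator ``bounded below $\Leftrightarrow$ invertible'' reduction and the cross-term substitution), which is fine.
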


\begin{proof}
    Let $G= \{S_{F}^{-1}f_i +h_i\}_{i=1}^\infty$ be a dual of $F,$
wherein $\sum\limits_{i=1}^\infty \langle f, f_i\rangle h_i = 0.$ Now,
\begin{align}\label{eqn3point2}
    {\Theta_G}_{\Lambda^c}^* {\Theta_G}_{\Lambda^c} f &= \sum\limits_{i \in \Lambda^c} \left\langle f, S_{F}^{-1}f_i + h_i\right\rangle \left( S_{F}^{-1}f_i + h_i\right) \nonumber\\&= \sum\limits_{i \in \Lambda^c} \left\langle f, S_{F}^{-1}f_i \right\rangle S_{F}^{-1}f_i + \sum\limits_{i \in \Lambda^c} \left\langle f, S_{F}^{-1}f_i \right\rangle h_i + \sum\limits_{i \in \Lambda^c} \langle f, h_i \rangle S_{F}^{-1}f_i + \sum\limits_{i \in \Lambda^c} \langle f, h_i \rangle h_i \nonumber\\&= S_{F}^{-1}\left( \sum\limits_{i \in \Lambda^c} \left\langle f, S_{F}^{-1}f_i \right\rangle f_i \right) + \sum\limits_{i \in \Lambda^c} \left\langle f, S_{F}^{-1}f_i \right\rangle h_i +  S_{F}^{-1}\left(  \sum\limits_{i \in \Lambda^c} \left\langle f, h_i \right\rangle f_i \right) + \sum\limits_{i \in \Lambda^c} \langle f, h_i \rangle h_i \nonumber\\&= S_{F}^{-1} {\Theta_F}_{\Lambda^c}^* {\Theta_F}_{\Lambda^c} S_{F}^{-1} f + {\Theta_h}_{\Lambda^c}^*{\Theta_F}_{\Lambda^c}S_{F}^{-1} f + S_{F}^{-1} {\Theta_F}_{\Lambda^c}^* {\Theta_h}_{\Lambda^c} f + {\Theta_h}_{\Lambda^c}^* {\Theta_h}_{\Lambda^c}f \nonumber\\&=\bigg(\left(I - (\Theta_{F}^* \Theta_F)^{-1} {\Theta_F}_{\Lambda}^* {\Theta_F}_{\Lambda}\right)(\Theta_{F}^* \Theta_F)^{-1} -  {\Theta_h}_{\Lambda}^* {\Theta_F}_{\Lambda}^*   (\Theta_{F}^* \Theta_F)^{-1} - \nonumber \\& \quad \;\;\;\;\;(\Theta_{F}^* \Theta_F)^{-1} {\Theta_F}_{\Lambda}^* {\Theta_h}_{\Lambda} + {\Theta_h}_{\Lambda^c}^* {\Theta_h}_{\Lambda^c}\bigg)f 
\end{align}
Thus, ${\Theta_G}_{\Lambda^c}^* {\Theta_G}_{\Lambda^c}$ is invertible if and only if  $$ \left(\left(I - (\Theta_{F}^* \Theta_F)^{-1} {\Theta_F}_{\Lambda}^* {\Theta_F}_{\Lambda}\right)(\Theta_{F}^* \Theta_F)^{-1} -  {\Theta_h}_{\Lambda}^* {\Theta_F}_{\Lambda}^*   (\Theta_{F}^* \Theta_F)^{-1} - (\Theta_{F}^* \Theta_F)^{-1} {\Theta_F}_{\Lambda}^* {\Theta_h}_{\Lambda} + {\Theta_h}_{\Lambda^c}^* {\Theta_h}_{\Lambda^c}\right)$$ is invertible and hence the result follows.

\end{proof}

\begin{cor}
    $\Lambda$ satisfy the MRC for $S_{F}^{-1}F$ if and only if $\left( I -  {\Theta_{S_{F}^{-1}F}}_{\Lambda}^* {\Theta_F}_{\Lambda}\right)$ is invertible.
\end{cor}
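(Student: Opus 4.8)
The plan is to obtain this statement as the $h=0$ specialization of the preceding Proposition. Taking $h_i=0$ for every $i$, the perturbation sequence trivially satisfies $\sum_{i}\langle f,f_i\rangle h_i=0$, so $G=\{S_F^{-1}f_i\}_{i=1}^\infty=S_F^{-1}F$ is a legitimate dual of $F$ — namely the canonical dual — and the Proposition applies verbatim for this choice; note also that, by Theorem \ref{thm3point1}, the standing hypothesis that $\Lambda$ satisfies the MRC for $F$ is the same as saying $\Lambda$ satisfies the MRC for $S_F^{-1}F$, so nothing is lost.

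Next I would substitute $\Theta_h=0$ into the operator whose invertibility the Proposition characterizes. All three terms carrying a factor of $\Theta_h$ vanish, leaving exactly $\bigl(I-(\Theta_F^*\Theta_F)^{-1}{\Theta_F}_\Lambda^*{\Theta_F}_\Lambda\bigr)(\Theta_F^*\Theta_F)^{-1}$. Since $\Theta_F^*\Theta_F=S_F$ is bounded, positive, and invertible, so is $(\Theta_F^*\Theta_F)^{-1}=S_F^{-1}$; hence the displayed product is invertible if and only if its first factor $I-S_F^{-1}{\Theta_F}_\Lambda^*{\Theta_F}_\Lambda$ is invertible.

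It then remains to identify $S_F^{-1}{\Theta_F}_\Lambda^*{\Theta_F}_\Lambda$ with ${\Theta_{S_F^{-1}F}}_\Lambda^*{\Theta_F}_\Lambda$. This is the same manipulation used in the proof of Theorem \ref{thm3point1}: because $\Lambda$ is finite, for every $f\in\mathcal{H}$ one may pull the bounded operator $S_F^{-1}$ out of the finite sum,
\[
{\Theta_{S_F^{-1}F}}_\Lambda^*{\Theta_F}_\Lambda f=\sum_{i\in\Lambda}\langle f,f_i\rangle S_F^{-1}f_i=S_F^{-1}\sum_{i\in\Lambda}\langle f,f_i\rangle f_i=S_F^{-1}{\Theta_F}_\Lambda^*{\Theta_F}_\Lambda f .
\]
Combining this with the previous step, $\Lambda$ satisfies the MRC for $S_F^{-1}F$ — equivalently, by the Proposition with $h=0$, ${\Theta_{S_F^{-1}F}}_{\Lambda^c}^*{\Theta_{S_F^{-1}F}}_{\Lambda^c}$ is invertible — if and only if $I-{\Theta_{S_F^{-1}F}}_\Lambda^*{\Theta_F}_\Lambda$ is invertible, which is the assertion of the corollary. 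I do not expect any genuine obstacle here; the only points requiring a little care are the harmless bookkeeping that collapses the four-term operator of the Proposition to the single displayed term, and the observation that composition with the invertible operator $S_F^{-1}$ preserves invertibility.
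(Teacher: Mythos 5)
Your proposal is correct and follows exactly the paper's route: specialize the preceding Proposition (equation \eqref{eqn3point2}) to $h_i=0$, so that the three terms involving $\Theta_h$ vanish. The only difference is that you spell out the two small steps the paper leaves implicit — cancelling the invertible right factor $(\Theta_F^*\Theta_F)^{-1}=S_F^{-1}$ and identifying $S_F^{-1}{\Theta_F}_\Lambda^*{\Theta_F}_\Lambda$ with ${\Theta_{S_F^{-1}F}}_\Lambda^*{\Theta_F}_\Lambda$ — both of which are carried out correctly.
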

\begin{proof}
The result follows directly from equation \eqref{eqn3point2} by setting \( h_i = 0 \) for all \( i \). In this case, the additional terms vanish, reducing the condition to the case for the canonical dual frame \( S_F^{-1}F \), thus proving the Corollary.
\end{proof}

\section{Robustness of Erasures}
A frame $\{f_i\}_{i=1}^\infty$ is referred to as \emph{exact} if it becomes incomplete whenever any single element is removed. A frame \(\{\varphi_i\}_{i=1}^M\) is said to be robust to \(k\) erasures if it retains the frame property even after removing \(k\) elements from the frame. More explicitly, for any index set \(I \subset \{1, 2, \ldots, M\}\) with \(|I| = k\), the remaining set of vectors \(\{\varphi_i\}_{i \in I^c}\), where \(I^c\) is the complement of \(I\) in \(\{1, 2, \ldots, M\}\), still forms a frame for the Hilbert space \(\mathcal{H}\). This property implies that the frame is sufficiently redundant to withstand the loss of any \(k\) frame elements without compromising its ability to provide stable and unique representations of vectors in \(\mathcal{H}\). The robustness to erasures is a critical feature in applications such as signal processing, data transmission, and compressed sensing, where loss of data or components is inevitable, and maintaining the completeness of the representation is essential. In what follows, we use $\mathcal{N}(T)$ to denote the null space of an operator or matrix $T$.


Let $F = \{f_i\}_{i=1}^\infty$ be  a $m-$erasure robust  frame for $\mathcal{H}.$ Then $ \{f_i\}_{i=m+1}^\infty$ is an exact frame or basis for $\mathcal{H}.$ Consequently, for $1 \leq i \leq m,$ we can find a unique sequence $\{d_{i,j}\}_{m+1 \leq j <\infty}$ such that
\begin{equation}\label{eqn5point4}
    f_i = \sum\limits_{j=m+1}^\infty d_{i,j}\, f_j.
\end{equation}

Let us define the matrix
	$$\Gamma_{F} :=	\begin{pmatrix}
		1 & 0  & \cdots &0 & -d^{*}_{1,m+1} & \cdots  \\
        0 & 1  & \cdots &0 & -d^{*}_{2,m+1} & \cdots  \\
		\vdots &\vdots  &\vdots &\vdots &\vdots&\vdots\\
		0 & 0  & \cdots &1 & -d^{*}_{m,m+1} & \cdots \\
	\end{pmatrix}.$$
\\
\begin{thm}
    For a $m-$erasure robust  frame $F$ for $\mathcal{H},\;T\mathcal{H} = \mathcal{N}\left( \Gamma_{F}\right)$ 
\end{thm}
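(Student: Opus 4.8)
The plan is to read $T$ as the analysis operator $\Theta_F$ of $F$ — the only interpretation under which both sides of the asserted identity are subspaces of $\ell^2$ — and to prove the two inclusions $\Theta_F\mathcal{H}\subseteq\mathcal{N}(\Gamma_F)$ and $\mathcal{N}(\Gamma_F)\subseteq\Theta_F\mathcal{H}$, throughout regarding $\Gamma_F$ as a bounded operator from $\ell^2$ into $\mathbb{C}^m$. The first thing I would record is that, because $F$ is $m$-erasure robust, the tail $\{f_j\}_{j\ge m+1}$ is an exact frame, hence a Riesz basis for $\mathcal{H}$ (a standard fact, see \cite{ole}); consequently each coefficient sequence $\{d_{i,j}\}_{j\ge m+1}$ appearing in \eqref{eqn5point4} is square-summable, so every row of $\Gamma_F$ lies in $\ell^2$ and $\Gamma_F$ really is well-defined and bounded on $\ell^2$. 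Reading off the matrix action, $c=(c_k)_k\in\mathcal{N}(\Gamma_F)$ exactly when
\[
c_i=\sum_{j\ge m+1}\overline{d_{i,j}}\,c_j\qquad(1\le i\le m).
\]

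For the inclusion $\Theta_F\mathcal{H}\subseteq\mathcal{N}(\Gamma_F)$, I would fix $f\in\mathcal{H}$ and use \eqref{eqn5point4} together with continuity of the inner product (the series $\sum_{j\ge m+1}d_{i,j}f_j$ converges in $\mathcal{H}$) to obtain, for $1\le i\le m$,
\[
\langle f,f_i\rangle=\Big\langle f,\ \sum_{j\ge m+1}d_{i,j}f_j\Big\rangle=\sum_{j\ge m+1}\overline{d_{i,j}}\,\langle f,f_j\rangle,
\]
which is precisely the kernel equation applied to $\Theta_F f$; hence $\Gamma_F\Theta_F f=0$, i.e. $\Theta_F f\in\mathcal{N}(\Gamma_F)$.

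For the reverse inclusion I would start from $c=(c_k)_k\in\mathcal{N}(\Gamma_F)\subseteq\ell^2$ and reconstruct a preimage using the unique dual Riesz basis $\{g_j\}_{j\ge m+1}$ of $\{f_j\}_{j\ge m+1}$, which is biorthogonal, $\langle g_j,f_k\rangle=\delta_{jk}$. Put $f:=\sum_{j\ge m+1}c_j g_j$; this converges in $\mathcal{H}$ since $(c_j)_j\in\ell^2$ and $\{g_j\}$ is a Riesz basis. Biorthogonality gives $\langle f,f_k\rangle=c_k$ for $k\ge m+1$, and then \eqref{eqn5point4} together with the kernel relations yields, for $1\le i\le m$,
\[
\langle f,f_i\rangle=\sum_{j\ge m+1}\overline{d_{i,j}}\,\langle f,f_j\rangle=\sum_{j\ge m+1}\overline{d_{i,j}}\,c_j=c_i .
\]
Thus $\Theta_F f=c$, so $c\in\Theta_F\mathcal{H}$, and combining the two inclusions gives $T\mathcal{H}=\mathcal{N}(\Gamma_F)$.

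The computations are routine manipulations of frame expansions; the one place that needs genuine care — and the only place the hypothesis is really used — is the opening step: one must know that $\{f_j\}_{j\ge m+1}$ is not merely a Schauder basis but a Riesz basis, so that the coefficients $d_{i,j}$ are square-summable, the rows of $\Gamma_F$ make sense as elements of $\ell^2$, and the series $\sum_{j\ge m+1}c_j g_j$ converges in $\mathcal{H}$. That is exactly what $m$-erasure robustness delivers through the implication ``exact frame $\Rightarrow$ Riesz basis.''
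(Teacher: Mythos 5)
Your proof is correct and follows essentially the same route as the paper: verify the kernel equations for $\Theta_F f$ via \eqref{eqn5point4} for one inclusion, and for the other produce a preimage $f$ whose frame coefficients on the tail indices match the given sequence. The only difference is that you make explicit what the paper merely asserts — that $T$ is the analysis operator acting into $\ell^2$, that the rows of $\Gamma_F$ are square-summable because the exact tail $\{f_j\}_{j\ge m+1}$ is a Riesz basis, and that the preimage can be built concretely as $\sum_{j\ge m+1}c_j g_j$ using the biorthogonal dual — which tightens the argument without changing it.
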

\begin{proof}
    Let  $\alpha = \left(\alpha_1, \alpha_2, \ldots  \right)^t \in T\mathcal{H}.$ then there exists a $f \in \mathcal{H}$ such that $Tf = \alpha.$ This implies $\langle f, f_i \rangle = \alpha_i, $ for all $i.$ As $F$ is a $m-$erasure robust frame for $\mathcal{H},$ \\
 which follows that $\text{span}\left\{f_i : m+1 \leq i < \infty \right\} = \mathcal{H}$ and hence by\eqref{eqn5point4} for $1 \leq i \leq m,$
$$\alpha_i = \langle f, f_i \rangle = \sum\limits_{j= m+1}^\infty d_{i,j}^{*} \alpha_j .$$
This gives $\Gamma_{F} (\alpha) = 0 $ and hence, $\alpha \in \mathcal{N}\left( \Gamma_{F}\right).$ \\

Conversely, $\alpha \in \mathcal{N}\left( \Gamma_{F}\right)$ implies that $\alpha_i = \sum\limits_{j= m+1}^\infty d_{i,j}^{*} \alpha_j , \, 1\leq i \leq m.$ As $ \{f_i\}_{i=m+1}^\infty$ is a basis for $\mathcal{H},$ then there exist a $f \in \mathcal{H}$ such that $\langle f, f_i \rangle= \alpha_i, \, m+1 \leq i < \infty.$ For $1 \leq i \leq m,\, \alpha_i =  \sum\limits_{j= m+1}^\infty d_{i,j}^{*} \alpha_j , \, 1\leq i \leq m.$ As $ \{f_i\}_{i=m+1}^\infty$ is a basis for $\mathcal{H},$ then there exist a $f \in \mathcal{H}$ such that $\langle f, f_i \rangle= \alpha_i, \, m+1 \leq i < \infty.$  For $1 \leq i \leq m,\,$ 
$$ \alpha_i = \sum\limits_{j= m+1}^\infty d_{i,j}^{*} \alpha_j=  \sum\limits_{j= m+1}^\infty d_{i,j}^{*} \langle f, f_j \rangle = \left\langle f, \sum\limits_{j= m+1}^\infty d_{i,j} f_j \right\rangle = \langle f, f_i \rangle. $$ 
Hence, $\alpha \in T\mathcal{H}.$

\end{proof}

The following proposition establishes a crucial connection between the robustness of a frame to \(m\)-erasures and the structural properties of an associated matrix \(\Gamma\). Specifically it states that the \(m\)-erasure robustness of the frame ensures that every subset of \(m\) columns of \(\Gamma\) is linearly independent. This underscores the frame's capability to maintain sufficient information for reconstruction even in the presence of \(m-\) erasures.

\begin{prop}
    Let $F= \{f_i\}_{i=1}^\infty$ be a $m-$erasure robust frame for $\mathcal{H}$ and $\Gamma$ be a matrix such that $T\mathcal{H} = \mathcal{N}\left( \Gamma\right).$ Then every $m$ columns of\, $\Gamma$ are linearly independent.
\end{prop}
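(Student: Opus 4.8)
The plan is to argue by contradiction, translating "$m$ columns of $\Gamma$ are linearly dependent" into a statement about the frame failing to be $m$-erasure robust. The key observation is that the hypothesis $T\mathcal{H} = \mathcal{N}(\Gamma)$ identifies the range of the analysis operator $T = \Theta_F$ with the kernel of $\Gamma$; so a linear-dependence relation among $m$ columns of $\Gamma$ should produce a nonzero vector in $\mathcal{H}$ whose frame coefficients are supported away from those $m$ indices, i.e. a nonzero vector orthogonal to $\{f_i : i \in I^c\}$ for a set $I$ with $|I| \le m$, contradicting that $\{f_i\}_{i \in I^c}$ is still a frame.

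First I would fix a set $I = \{i_1, \dots, i_m\}$ of $m$ column indices and suppose the corresponding columns $\gamma_{i_1}, \dots, \gamma_{i_m}$ of $\Gamma$ are linearly dependent, so there are scalars $c_1, \dots, c_m$, not all zero, with $\sum_{\ell=1}^m c_\ell \gamma_{i_\ell} = 0$. Let $\beta$ be the sequence with $\beta_{i_\ell} = c_\ell$ and $\beta_j = 0$ for $j \notin I$; then $\Gamma \beta = 0$, so $\beta \in \mathcal{N}(\Gamma) = T\mathcal{H}$. Hence there is $f \in \mathcal{H}$ with $Tf = \beta$, that is $\langle f, f_j \rangle = 0$ for all $j \notin I$ and $\langle f, f_{i_\ell}\rangle = c_\ell$. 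Since some $c_\ell \neq 0$, $f \neq 0$. Next I would invoke $m$-erasure robustness: removing the (at most $m$) elements indexed by $I$ leaves $\{f_j : j \in I^c\}$ a frame for $\mathcal{H}$, so no nonzero vector is orthogonal to all of them — contradicting $\langle f, f_j\rangle = 0$ for all $j \notin I$. This forces the $m$ chosen columns to be linearly independent, and since $I$ was arbitrary the conclusion follows.

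The main obstacle I anticipate is a bookkeeping subtlety rather than a conceptual one: one must be careful that the sequence $\beta$ defined above genuinely lies in the codomain of $T = \Theta_F$ (an $\ell^2$-type space) so that "$\beta \in T\mathcal{H}$" is meaningful — but $\beta$ has finite support, hence is trivially in $\ell^2$, so this is automatic. A second point to check is the precise indexing convention of $\Gamma$: in the preceding theorem $\Gamma_F$ has its first $m$ columns equal to the identity block and the later columns recording the coefficients $-d_{i,j}^*$; the present proposition abstracts this to any $\Gamma$ with $T\mathcal{H} = \mathcal{N}(\Gamma)$, so I would phrase the argument purely in terms of the kernel condition and never use the explicit entries, which keeps the proof clean and fully general.
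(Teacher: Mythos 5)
Your proposal is correct and follows essentially the same route as the paper's proof: a nontrivial dependence among $m$ columns yields a finitely supported sequence in $\mathcal{N}(\Gamma) = T\mathcal{H}$, hence a nonzero $f \in \mathcal{H}$ whose frame coefficients vanish off the chosen index set, contradicting that the remaining vectors still form a frame (the paper phrases this as ``they span $\mathcal{H}$, so $f=0$''). The only difference is cosmetic: the paper reduces to the first $m$ columns without loss of generality, while you treat an arbitrary index set directly and are slightly more explicit that the dependence vector is padded with zeros outside $I$.
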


\begin{proof}
    Without loosing the generality we assume that the erasure set is $\Lambda= \{1,2,\ldots,m  \}.$ The other cases can be proved similarly. We will prove this by contradiction. Suppose that there are some constant $c_1,c_2,\ldots,c_m$ not all zero such that $c:=\left( c_1,c_2,\ldots,c_m,\ldots\right)^t$ satisfy $\Gamma c =0.$ Using the fact $T\mathcal{H} = \mathcal{N}\left( \Gamma\right),$ we can say that there exist a $f \in \mathcal{H}$ such that 
    \begin{equation}\label{eqn5point5}
        \langle f,f_i\rangle = \begin{cases}
			c_i,\;\;   & 1\leq i \leq m,\\
			0,\;\;\; & m+1 \leq i <\infty.
		\end{cases}
	 \end{equation}
    	
        Since $\{f_i\}_{i= m+1}^\infty$ is an exact frame for $\mathcal{H},$ therefore it spans $\mathcal{H}$ and hence $f=0.$ This leads to $c_i =0,\,$ for all $1 \leq i \leq m,$ which is not possible. Thus the first $m$ columns of $\Gamma $ are linearly independent. 
        \end{proof}

\noindent 
The following proposition highlights the preservation of robustness under orthogonal projections. Specifically, it shows that if a frame is $m$-erasure robust for a Hilbert space $\mathcal{H}$, then its projection onto a subspace $P\mathcal{H}$ retains the same $m$-erasure robustness. This result underscores the stability of frame robustness under linear transformations like orthogonal projections, making it a valuable property in applications such as signal processing and dimensionality reduction.

\begin{prop}
    Let $F= \{f_i\}_{i=1}^\infty$ be a  frame for $\mathcal{H}$ which is $m-$erasure robust and $P$ be an Orthogonal projection on $\mathcal{H}.$ Then, $PF$ is a $m-$erasure robust frame for $P\mathcal{H}.$
\end{prop}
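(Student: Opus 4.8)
The plan is to show that for any finite erasure set $\Lambda$ with $|\Lambda| = m$, the collection $\{Pf_i : i \in \Lambda^c\}$ is a frame for $P\mathcal{H}$. First I would record the easy direction: since $F$ is a frame for $\mathcal{H}$ with bounds $A, B$, the projected system $PF = \{Pf_i\}$ is a Bessel sequence in $P\mathcal{H}$ with bound $B$, because $\sum_i |\langle g, Pf_i\rangle|^2 = \sum_i |\langle Pg, f_i\rangle|^2 \leq B\|Pg\|^2 = B\|g\|^2$ for $g \in P\mathcal{H}$ (using $Pg = g$). The same computation applied to any subfamily shows $\{Pf_i : i \in \Lambda^c\}$ is Bessel in $P\mathcal{H}$, and in particular $PF$ itself is a frame for $P\mathcal{H}$ (the lower bound for the full family follows identically from the lower bound of $F$). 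So the substance is the lower frame bound after erasure.

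The key step is the lower bound. Fix $\Lambda$ with $|\Lambda| = m$. Since $F$ is $m$-erasure robust, $F_{\Lambda^c} = \{f_i : i \in \Lambda^c\}$ is a frame for $\mathcal{H}$; let $A_\Lambda > 0$ be a lower frame bound for it. Then for every $g \in P\mathcal{H}$,
\begin{align*}
\sum_{i \in \Lambda^c} |\langle g, Pf_i\rangle|^2 = \sum_{i \in \Lambda^c} |\langle Pg, f_i\rangle|^2 = \sum_{i \in \Lambda^c} |\langle g, f_i\rangle|^2 \geq A_\Lambda \|g\|^2,
\end{align*}
where I used $Pg = g$ again. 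Combined with the Bessel bound from the first paragraph, $\{Pf_i : i \in \Lambda^c\}$ is a frame for $P\mathcal{H}$ with bounds $A_\Lambda$ and $B$. Since $\Lambda$ was an arbitrary $m$-element subset of the index set, $PF$ is $m$-erasure robust for $P\mathcal{H}$, which is exactly the claim.

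The proof is genuinely short because orthogonal projections interact so cleanly with inner products when one argument already lies in the range of $P$. The only point requiring a moment's care — and the thing I would flag as the "obstacle," though it is minor — is making sure the frame bounds for $F_{\Lambda^c}$ can be taken uniformly, or at least that robustness is a statement for each individual $\Lambda$ separately; since the definition of $m$-erasure robustness quantifies over each such $\Lambda$ and only asks that the residual be a frame (with bounds allowed to depend on $\Lambda$), no uniformity is needed and the argument above suffices. I would also remark in passing that the same computation shows membership in any proper subspace is irrelevant to the upper bound, so the projected family is automatically Bessel regardless of robustness; the robustness hypothesis is used solely to transfer the lower bound of the erased family.
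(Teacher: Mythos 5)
Your proof is correct, but it takes a different route from the paper's. You verify the frame inequalities for $\{Pf_i\}_{i\in\Lambda^c}$ directly: the identity $\langle g, Pf_i\rangle = \langle Pg, f_i\rangle = \langle g, f_i\rangle$ for $g \in P\mathcal{H}$ reduces the claim to the observation that the frame inequalities for $F_{\Lambda^c}$ on all of $\mathcal{H}$ restrict to $P\mathcal{H}$, yielding explicit bounds $A_\Lambda$ and $B$. The paper instead argues via duality: since $F_{\Lambda^c}$ is a frame for $\mathcal{H}$ it admits a dual $G_{\Lambda^c} = \{g_i\}_{i\in\Lambda^c}$, and applying $P$ to the reconstruction formula gives $Pf = \sum_{i\in\Lambda^c}\langle f, Pf_i\rangle g_i = \sum_{i\in\Lambda^c}\langle f, g_i\rangle Pf_i$ for all $f$, from which the frame property of $\{Pf_i\}_{i\in\Lambda^c}$ on $P\mathcal{H}$ follows. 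Your argument is more elementary and has the advantage of producing explicit frame bounds for the projected, erased family; the paper's argument buys a concrete reconstruction formula, showing that the (unprojected) dual $G_{\Lambda^c}$ still synthesizes $Pf$ from the coefficients $\langle f, Pf_i\rangle$. Your closing remark that no uniformity in $\Lambda$ is required is also correct, since $m$-erasure robustness quantifies over each $\Lambda$ separately.
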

\begin{proof}
    For any $\Lambda= \{\ell_1, \ell_2,\ldots,\ell_m  \}\subset \{1,2,\ldots \infty  \},\,$ $F_\Lambda = \{f_i\}_{i \in \Lambda^c}$ is a frame for $\mathcal{H}.$ Therefore, $F_\Lambda$ has a dual $G_\Lambda = \{g_i\}_{i \in \Lambda^c}$ such that  
    $$f = \sum\limits_{i\in \Lambda^c}\langle f, f_i \rangle g_i = \sum\limits_{i\in \Lambda^c}\langle f, g_i \rangle f_i,$$
    for all $f \in \mathcal{H}.$ This leads to $Pf = \sum\limits_{i\in \Lambda^c}\langle Pf, f_i \rangle g_i = \sum\limits_{i\in \Lambda^c}\langle f, Pf_i \rangle g_i,$ using the first equality and $Pf = \sum\limits_{i\in \Lambda^c}\langle f, g_i \rangle P f_i,$ using the second equality and for all $f \in \mathcal{H}.$ Therefore, $\{Pf_i\}_{i \in \Lambda^c}$ is a frame for $P\mathcal{H}.$
\end{proof}

A theorem analogous to Theorem \ref{thm3point2} states as follows: 

\begin{thm}
 Let $U$ be a unitary operator on $\mathcal{H}$. A frame  $F= \{f_i\}_{i=1}^\infty$ on $\mathcal{H}$ is a $m-$erasure robust if and only if $UF= \{Uf_i\}_{i=1}^\infty$  is also $m-$erasure robust.
\end{thm}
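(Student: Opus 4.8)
The plan is to reduce the statement to the per-erasure-set equivalence that is already implicit in the remark preceding Theorem~\ref{thm3point2}. Fix an arbitrary index set $\Lambda \subset \{1,2,\ldots,\infty\}$ with $|\Lambda| = m$. By definition, $F$ is $m$-erasure robust precisely when $\{f_i\}_{i \in \Lambda^c}$ is a frame for $\mathcal{H}$ for every such $\Lambda$, and likewise $UF$ is $m$-erasure robust precisely when $\{Uf_i\}_{i \in \Lambda^c}$ is a frame for $\mathcal{H}$ for every such $\Lambda$. So it suffices to prove that, for each fixed $\Lambda$, the family $\{f_i\}_{i \in \Lambda^c}$ is a frame for $\mathcal{H}$ if and only if $\{Uf_i\}_{i \in \Lambda^c}$ is.

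First I would record the operator identity ${\Theta_{UF}}_{\Lambda^c}^*{\Theta_{UF}}_{\Lambda^c} = U\,{\Theta_F}_{\Lambda^c}^*{\Theta_F}_{\Lambda^c}\,U^*$, which is exactly the observation stated before Theorem~\ref{thm3point2}; it follows from $\langle Uf, Uf_i\rangle = \langle f, f_i\rangle$ together with $\Theta_{UF}^*\{c_i\} = \sum_i c_i Uf_i = U\Theta_F^*\{c_i\}$. Since conjugation by the unitary $U$ preserves invertibility (with inverse $U({\Theta_F}_{\Lambda^c}^*{\Theta_F}_{\Lambda^c})^{-1}U^*$), the restricted frame operator ${\Theta_{UF}}_{\Lambda^c}^*{\Theta_{UF}}_{\Lambda^c}$ is a positive invertible operator if and only if ${\Theta_F}_{\Lambda^c}^*{\Theta_F}_{\Lambda^c}$ is, and a subfamily is a frame for $\mathcal{H}$ exactly when its restricted frame operator is bounded, positive, and invertible. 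Hence the two subfamilies are frames for $\mathcal{H}$ simultaneously.

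An alternative, slightly more elementary presentation I might prefer is to invoke directly that a unitary maps a frame with bounds $A, B$ to a frame with the same bounds: if $A\|f\|^2 \le \sum_{i \in \Lambda^c}|\langle f, f_i\rangle|^2 \le B\|f\|^2$ for all $f \in \mathcal{H}$, then substituting $f = U^*g$ and using $\langle U^*g, f_i\rangle = \langle g, Uf_i\rangle$ and $\|U^*g\| = \|g\|$ gives the identical inequalities for $\{Uf_i\}_{i \in \Lambda^c}$; the converse follows by applying $U^*$ in place of $U$. Running this argument for every $\Lambda$ with $|\Lambda| = m$ establishes both implications of the theorem at once.

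I do not anticipate a genuine obstacle. The only point deserving a word of care is that $m$-erasure robustness is quantified over all erasure sets of size $m$, so the equivalence must hold uniformly in $\Lambda$ — which it does, because the argument above fixes an arbitrary $\Lambda$ and uses nothing special about it. The result is therefore the robustness-analogue of Theorem~\ref{thm3point2}, precisely as the surrounding text advertises.
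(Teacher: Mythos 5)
Your proof is correct and follows essentially the route the paper intends: the paper leaves this theorem unproved, pointing to the identity ${\Theta_{UF}}_{\Lambda^c}^*{\Theta_{UF}}_{\Lambda^c} = U\,{\Theta_F}_{\Lambda^c}^*{\Theta_F}_{\Lambda^c}\,U^*$ and the argument of Theorem~\ref{thm3point2}, which is exactly your first argument. Your alternative via direct transfer of the frame bounds under $f \mapsto U^*g$ is a fine, slightly more elementary phrasing of the same fact.
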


A frame $F = \{f_i\}_{i=1}^\infty$ for a Hilbert space $\mathcal{H}$ is called a frame with excess $m$ if for any subset $\Lambda \subset \{1, 2, \ldots, \infty\}$ with $\lvert \Lambda \rvert = m$, it holds that 
\[
f_i = \sum_{j \notin \Lambda} a_{ij} f_j, \quad \text{for all } i \in \Lambda,
\]
where $a_{ij}$ are suitable coefficients. In other words, The excess of a frame $F$ is defined as 
\[
\mathcal{E}(F) = \sup\left\{\lvert \alpha \rvert : \alpha \subset F \text{ and } \overline{\text{span}}(F \setminus \alpha) = \overline{\text{span}}(F)\right\}.
\]

\noindent The concept of excess in a frame is closely connected to both the MRC and robustness. A frame with excess $m$ ensures that any subset of $m$ elements can be removed while the remaining elements still span the Hilbert space, satisfying the MRC for subsets of size $m.$ This redundancy guarantees that the frame retains its structure and completeness despite the removal of specific elements. Consequently, a frame with excess $m$ is also inherently robust to $m-$erasures, meaning that the removal of up to $m$ elements does not prevent the remaining elements from forming a valid frame. This dual connection highlights how excess encapsulates both redundancy and stability, ensuring resilience to data loss and enabling robust signal reconstruction in practical applications.

\begin{prop}

    Let $F = \{f_i\}_{i=1}^\infty$ be a frame for a Hilbert space $\mathcal{H}$. Then $F$ has excess $m$ if and only if for any subset $\Lambda \subset \mathbb{N}$ with $\lvert \Lambda \rvert = m$, the set $F_c = \{f_i\}_{i \in \Lambda^c}$ is also a frame for $\mathcal{H}$.
\end{prop}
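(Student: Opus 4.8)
The plan is to prove the two implications separately, with essentially all of the work in the forward direction, where ``$F_{\Lambda^c}$ spans $\mathcal{H}$'' must be upgraded to ``$F_{\Lambda^c}$ is a frame''. First I would dispose of the easy half of the frame inequality: for any finite $\Lambda$ and all $f\in\mathcal{H}$ we have $\sum_{i\in\Lambda^c}|\langle f,f_i\rangle|^2\le\sum_{i\in\mathbb{N}}|\langle f,f_i\rangle|^2\le B\|f\|^2$, so $F_{\Lambda^c}=\{f_i\}_{i\in\Lambda^c}$ automatically inherits the upper bound of $F$. Thus $F_{\Lambda^c}$ is a frame if and only if it satisfies a lower bound, i.e.\ if and only if the partial frame operator $S_{\Lambda^c}:=\sum_{i\in\Lambda^c}\langle\cdot,f_i\rangle f_i=S_F-K_\Lambda$ is invertible, where $K_\Lambda:=\sum_{i\in\Lambda}\langle\cdot,f_i\rangle f_i$ is positive of rank at most $|\Lambda|=m$ and $S_F\ge A\,I$.

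The crucial ingredient is a deletion lemma: \emph{if $\overline{\text{span}}\{f_i:i\in\Lambda^c\}=\mathcal{H}$, with $\Lambda$ finite, then $F_{\Lambda^c}$ is a frame.} I would prove this by contradiction using weak compactness of the closed unit ball of $\mathcal{H}$. If no lower bound exists, pick $g_n$ with $\|g_n\|=1$ and $\sum_{i\in\Lambda^c}|\langle g_n,f_i\rangle|^2\to0$, and pass to a weakly convergent subsequence $g_n\rightharpoonup g$. Then $\langle g,f_i\rangle=\lim_n\langle g_n,f_i\rangle=0$ for every $i\in\Lambda^c$, so $g\perp\overline{\text{span}}\{f_i:i\in\Lambda^c\}=\mathcal{H}$, hence $g=0$. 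Since $\Lambda$ is finite, along that subsequence $\sum_{i\in\Lambda}|\langle g_n,f_i\rangle|^2\to\sum_{i\in\Lambda}|\langle g,f_i\rangle|^2=0$, so $\sum_{i\in\mathbb{N}}|\langle g_n,f_i\rangle|^2\to0$ with $\|g_n\|=1$, contradicting the lower frame bound $A$ of $F$. (Equivalently: $S_{\Lambda^c}=S_F-K_\Lambda$ is positive self-adjoint with trivial kernel and $K_\Lambda$ of finite rank, so $0$ cannot lie in its approximate point spectrum.)

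For the forward implication, assume $F$ has excess $m$ and fix $\Lambda$ with $|\Lambda|=m$. By definition every $f_i$ with $i\in\Lambda$ is a convergent combination $\sum_{j\notin\Lambda}a_{ij}f_j$, hence lies in $\overline{\text{span}}\{f_j:j\notin\Lambda\}$; therefore $\overline{\text{span}}\{f_j:j\notin\Lambda\}=\overline{\text{span}}(F)=\mathcal{H}$, and the deletion lemma yields that $F_{\Lambda^c}$ is a frame. For the converse, assume $F_{\Lambda^c}$ is a frame for every $\Lambda$ with $|\Lambda|=m$; fixing such a $\Lambda$, choose any dual frame $\{g_j\}_{j\notin\Lambda}$ of $F_{\Lambda^c}$ (for instance the canonical dual $g_j=S_{\Lambda^c}^{-1}f_j$) and apply the reconstruction formula to each $f_i$, $i\in\Lambda$, to obtain $f_i=\sum_{j\notin\Lambda}\langle f_i,g_j\rangle f_j$; the coefficients $a_{ij}:=\langle f_i,g_j\rangle$ exhibit $F$ as having excess $m$. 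The one genuine obstacle is the deletion lemma: the finiteness of $\Lambda$ together with the weak-compactness argument is precisely what is needed to turn mere completeness into the frame property, while the remaining steps are bookkeeping plus the dual-frame reconstruction identity.
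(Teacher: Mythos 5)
Your proof is correct, but the hard direction (excess $m$ implies $F_{\Lambda^c}$ is a frame) is handled quite differently from the paper. The paper argues by a direct estimate: it substitutes $f_i=\sum_{j\in\Lambda^c}a_{ij}f_j$ into $\sum_{i\in\Lambda}|\langle f,f_i\rangle|^2$ and bounds the erased terms by a constant multiple of $\sum_{j\in\Lambda^c}|\langle f,f_j\rangle|^2$, yielding an explicit lower frame bound $A/(am+1)$ with $a=\max|a_{ij}|^2$. That computation, however, rests on the identity $\bigl|\bigl\langle f,\sum_{j\in\Lambda^c}a_{ij}f_j\bigr\rangle\bigr|^2=\sum_{j\in\Lambda^c}|a_{ij}|^2|\langle f,f_j\rangle|^2$, which is false in general (the cross terms are discarded), and it also presumes that $\max_j|a_{ij}|^2$ controls an infinite sum. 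Your route replaces this with the classical deletion lemma --- removing finitely many vectors from a frame leaves either a frame or an incomplete set --- proved via weak compactness (equivalently, via the observation that $S_{\Lambda^c}=S_F-K_\Lambda$ is a finite-rank, hence Fredholm index-zero, perturbation of the invertible $S_F$, so injectivity forces invertibility). This is rigorous, requires no quantitative control on the coefficients $a_{ij}$, and only uses that each erased $f_i$ lies in $\overline{\mathrm{span}}\{f_j:j\in\Lambda^c\}$; the price is that it is purely qualitative and gives no explicit lower bound, whereas the paper's (flawed) computation was aiming for one. Your converse direction, via reconstruction with a dual of $F_{\Lambda^c}$, matches the paper's easy direction in substance.
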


\begin{proof}
    Let $F_c = \{f_i\}_{i \in \Lambda^c}$ is a frame for $\mathcal{H}$ for any $\Lambda \subset \{1, 2, \ldots, \infty\}$ with $\lvert \Lambda \rvert = m.$ Thus $\text{span} \left\{ f_i  \right\}_{i \in \Lambda^c} = \mathcal{H}.$ Therefore, for $i \in \Lambda,\;\,$ there exists a sequence $\left\{a_{ij}\right\}_{j \in \Lambda^c}$ such that $f_i = \sum\limits_{j \in \Lambda^c} a_{ij} f_j, \quad \text{for all } i \in \Lambda.$ Conversely, Suppose  $F = \{f_i\}_{i=1}^\infty$ be a frame with excess $m.$ Then,  for all $i \in \Lambda,\;\,f_i = \sum\limits_{j \notin \Lambda} a_{ij} f_j.$ Now, 
    \begin{align*}
        \displaystyle\sum\limits_{i =1}^\infty \big|\langle f,f_i\rangle\big|^2 &= \sum\limits_{i \in \Lambda}\big|\langle f,f_i\rangle\big|^2 + \sum\limits_{i \in \Lambda^c}\big|\langle f,f_i\rangle\big|^2 \\&= \displaystyle\sum\limits_{i \in \Lambda}\bigg|\left\langle f, \sum\limits_{j \in \Lambda^c} a_{ij} f_j\right\rangle\bigg|^2 + \sum\limits_{i \in \Lambda^c}\big|\langle f,f_i\rangle\big|^2 \\&= \sum\limits_{i \in \Lambda}\sum\limits_{j \in \Lambda^c}\left|a_{ij}\right|^2 \big|\langle f,f_j\rangle\big|^2 + \sum\limits_{i \in \Lambda^c}\big|\langle f,f_i\rangle\big|^2 \\&\leq a \sum\limits_{i \in \Lambda}\sum\limits_{j \in \Lambda^c} \big|\langle f,f_j\rangle\big|^2 + \sum\limits_{i \in \Lambda^c}\big|\langle f,f_i\rangle\big|^2 \\&= (am+1) \sum\limits_{i \in \Lambda^c}\big|\langle f,f_i\rangle\big|^2,
    \end{align*}
    where $a= \max \left|a_{ij}\right|^2.$ Thus, $\displaystyle\sum\limits_{i \in \Lambda^c}\big|\langle f,f_i\rangle\big|^2 \geq \dfrac{1}{(am+1)}  \sum\limits_{i =1}^\infty \big|\langle f,f_i\rangle\big|^2 \geq  \dfrac{A}{(am+1)}\|f\|^2$ and hence, $\left\{  f_i\right\}_{i \in \Lambda^c}$ is a frame for $\mathcal{H}.$
\end{proof}

\begin{cor}

    Let $F = \{f_i\}_{i=1}^\infty$ be a frame for a Hilbert space $\mathcal{H}$ with excess $m$. Then $F$ satisfies the $m$-erasure minimal redundancy condition (MRC) for any subset $\Lambda \subset \{1, 2, \ldots, \infty\}$ with $\lvert \Lambda \rvert = m$ and thus $F$ is a $m-$erasure robust frame for $\mathcal{H}.$

\end{cor}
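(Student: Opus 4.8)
The plan is to obtain this corollary as an immediate consequence of the proposition that precedes it, which already establishes the equivalence between $F$ having excess $m$ and the property that $\{f_i\}_{i\in\Lambda^c}$ is a frame for $\mathcal{H}$ for \emph{every} index set $\Lambda$ with $\lvert \Lambda \rvert = m$. So the work is really just to unwind the definitions of the MRC and of $m$-erasure robustness and match them against that equivalence.

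First I would fix an arbitrary $\Lambda \subset \{1,2,\ldots\}$ with $\lvert \Lambda \rvert = m$. Since $F$ has excess $m$, the forward implication of the preceding proposition yields that $F_c = \{f_i\}_{i\in\Lambda^c}$ is a frame for $\mathcal{H}$; in particular $\overline{\text{span}}\{f_i : i \in \Lambda^c\} = \mathcal{H}$. By the definition of the minimal redundancy condition given in Section 3, this is exactly the statement that $\Lambda$ satisfies the MRC for $F$. As $\Lambda$ was an arbitrary subset of cardinality $m$, this shows that $F$ satisfies the $m$-erasure MRC.

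Next I would observe that the conclusion just obtained is, verbatim, the definition of $m$-erasure robustness introduced at the start of Section 4: for every $\Lambda$ with $\lvert \Lambda \rvert = m$, the remaining family $\{f_i\}_{i\in\Lambda^c}$ is a frame for $\mathcal{H}$. (If desired one can also note that deleting fewer than $m$ indices leaves a frame a fortiori, since any family containing a frame is itself a frame, but this is not required by the stated definition.) Hence $F$ is an $m$-erasure robust frame for $\mathcal{H}$.

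There is essentially no obstacle in this argument; the only point that needs a little care is bookkeeping of the logical direction — one must invoke the implication ``excess $m$ $\Rightarrow$ every size-$m$ deletion is a frame,'' which is the ``if'' part of the preceding proposition read appropriately — and then recognize that, once that proposition is in hand, ``$\Lambda$ satisfies the MRC for $F$'' and ``$F$ is $m$-erasure robust'' are two reformulations of one and the same condition.
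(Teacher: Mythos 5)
Your proposal is correct and follows exactly the route the paper intends: the corollary is stated without proof precisely because it is an immediate unwinding of the preceding proposition (the direction ``excess $m$ $\Rightarrow$ $\{f_i\}_{i\in\Lambda^c}$ is a frame for every $\lvert\Lambda\rvert=m$'') together with the definitions of the MRC and of $m$-erasure robustness. Your bookkeeping of which implication of the proposition is being invoked, and the observation that the MRC for all size-$m$ sets and $m$-erasure robustness are the same condition, is exactly what is needed.
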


\subsection{\textbf{Robustness of Besselian Frame}}

A frame $F = \{f_i\}_{i = 1}^\infty$ in a Hilbert space $\mathcal{H}$ is called a \textit{Besselian frame} if, whenever the series $\sum\limits_{i=1}^\infty a_i f_i$ converges, it follows that $\{a_i\}_{i=1}^\infty \in \ell^2$ \cite{holub}. Let $\Lambda$ be the erased set for the Besselian frame $F=\{f_i\}_{i=1}^\infty$. Let $f \in \mathcal{H}.$ Now we define the bridging set for $\Lambda.$ We replace each erased coefficient $\langle f,g_i\rangle ,$ for $j \in \Lambda$ with$\langle f,g'_i\rangle ,$ for $g'_j \in \text{span}\left\{ g_k: k \in \delta\right\}$ for some $\delta \subset \Lambda^c.$ Any set $\delta \subset \Lambda^c$ is called the bridge set. The partial reconstruction with bridging set is $\tilde{f} = f_{R} + f_{B},$ where $f_B = \sum\limits_{j \in \Lambda}\langle f, g'_j \rangle f_j$ and $f_B$ is called the \textit{bridging supplement} and $B_\Lambda:= \langle .,g'_j \rangle f_j$ is called the \textit{bridging supplement operator}. Then the reduced error is $f_{\tilde{E}} := f- \tilde{f}$ and the associated reduced error operator is $\tilde{E_{\Lambda}}= I - R_\Lambda - B_\Lambda.$ So, $\tilde{E_{\Lambda}}f = \sum\limits_{j \in \Lambda}\langle f, g_j - g'_j \rangle f_j.$ Let $g'_k = \sum\limits_{\ell \in \delta} c_{\ell}^{(k)}g_{\ell}.$ So, we have 
$$\langle f_j, g_k \rangle- \sum\limits_{\ell \in \delta} \overline{c_{\ell}^{(k)}}\langle f_j, g_{\ell} \rangle = 0.$$
For all $k\in \Lambda,$ we then have $\langle f_j, g_k \rangle =\sum\limits_{\ell \in \delta} c_{\ell}^{(k)}g_{\ell}.$ Let $\Lambda = \left\{ \lambda_j \right\}_{j=1}^L$ and $\delta = \left\{ \delta_j \right\}_{j=1}^M.$ We have the matrix equation 
\begin{equation}\label{eqn4point3}
    	\begin{pmatrix}
		\langle f_{\lambda_1}, g_{\delta_1} \rangle & \langle f_{\lambda_1}, g_{\delta_2} \rangle & \cdots &\langle f_{\lambda_1}, g_{\delta_M} \rangle   \\
        \langle f_{\lambda_2}, g_{\delta_1} \rangle & \langle f_{\lambda_2}, g_{\delta_2} \rangle & \cdots &\langle f_{\lambda_2}, g_{\delta_M} \rangle   \\
		\vdots &\vdots  &\vdots &\vdots\\
		\langle f_{\lambda_L}, g_{\delta_1} \rangle & \langle f_{\lambda_L}, g_{\delta_2} \rangle & \cdots &\langle f_{\lambda_L}, g_{\delta_M} \rangle  \\
	\end{pmatrix} 
    \begin{pmatrix}
	    \overline{c_{\delta_1}^{(k)}} \\ \overline{c_{\delta_2}^{(k)}} \\ \vdots \\ \overline{c_{\delta_M}^{(k)}} 
	\end{pmatrix} = \begin{pmatrix}
	    \langle f_{\lambda_1}, g_{k} \rangle \\ \langle f_{\lambda_2}, g_{k} \rangle \\ \vdots \\\langle f_{\lambda_L}, g_{k} \rangle
	\end{pmatrix}
\end{equation}
    The matrix $\begin{pmatrix}
		\langle f_{\lambda_1}, g_{\delta_1} \rangle & \langle f_{\lambda_1}, g_{\delta_2} \rangle & \cdots &\langle f_{\lambda_1}, g_{\delta_M} \rangle   \\
        \langle f_{\lambda_2}, g_{\delta_1} \rangle & \langle f_{\lambda_2}, g_{\delta_2} \rangle & \cdots &\langle f_{\lambda_2}, g_{\delta_M} \rangle   \\
		\vdots &\vdots  &\vdots &\vdots\\
		\langle f_{\lambda_L}, g_{\delta_1} \rangle & \langle f_{\lambda_L}, g_{\delta_2} \rangle & \cdots &\langle f_{\lambda_L}, g_{\delta_M} \rangle  \\
	\end{pmatrix} $ is called the \textit{bridge matrix}. \\
    
   \noindent We notate this matrix by  $\Xi(F,G,\Lambda, \delta).$ Therefore the equation \eqref{eqn4point3} can be written as\\ $\Xi(F,G,\Lambda, \delta) C = \Xi(F,G,\Lambda, \Lambda).$ For further details on the bridging set, please refer to \cite{larson}. 
    \begin{rem}\cite{larson}
       For a dual frame pair $(F, G)$ for $\mathcal{H}$, there exists a bridge set $\delta$ for an erased set $\Lambda$ if and only if $\Lambda$ satisfies the minimal redundancy condition.
   \end{rem}

\noindent The following proposition establishes a fundamental equivalence for Besselian frames in the context of robustness to single erasures. Specifically, it reveals that a Besselian frame $F$ is robust to a single erasure if and only if there exists a sequence of non-zero scalars ${a_i}$ belonging to $\ell^2$ such that their weighted sum with the frame elements results in the zero vector. This equivalence provides a concrete characterization of $1$-erasure robustness, connecting it to the existence of such a sequence, which inherently reflects a balance within the frame's structure. The result bridges the concepts of robustness and the frame's internal linear dependencies, offering a deeper insight into how Besselian frames sustain their functional properties despite single-point erasures.

\begin{thm} \label{prop3point1be1robust}
    Let $F=\{f_i\}_{i=1}^\infty$ be a Besselian frame for $\mathcal{H}.$ Then the following are equivalent:
    \begin{enumerate}
			\item [{\em (i)}] $F$ be a Besselian frame robust to $1-$erasure.
			\item [{\em (ii)}] There exists a sequence of non-zero scalers $\{a_i\}$ such that $\{a_i\}_{i=1}^\infty \in \ell^2$  and $\sum\limits_{i=1}^\infty a_i f_i = 0.$ 
		\end{enumerate}
\end{thm}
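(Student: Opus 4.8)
The plan is to prove the two implications separately, exploiting the Besselian hypothesis to move between "linear dependence with $\ell^2$ coefficients" and "removing one vector still leaves a frame."

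For (i)$\Rightarrow$(ii): If $F$ is robust to $1$-erasure, then for each index $k$ the set $\{f_i\}_{i\neq k}$ is a frame for $\mathcal{H}$, hence in particular complete; so $f_k \in \overline{\operatorname{span}}\{f_i : i\neq k\}$. The key point is that, because $\{f_i\}_{i\neq k}$ is a frame (with some dual $\{g_i\}_{i\neq k}$), the expansion $f_k = \sum_{i\neq k}\langle f_k, g_i\rangle f_i$ holds with coefficients $\{\langle f_k,g_i\rangle\}_{i\neq k}\in \ell^2$ (they are frame coefficients of the fixed vector $f_k$). Setting $a_k = -1$ and $a_i = \langle f_k, g_i\rangle$ for $i\neq k$ gives a sequence in $\ell^2$, not identically zero (indeed $a_k=-1\neq 0$), with $\sum_i a_i f_i = 0$. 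If one wants all scalars nonzero, a small perturbation argument works: add to $\{a_i\}$ a suitable element of the (infinite-dimensional, since $F$ is not a basis) null space of the synthesis operator to kill any vanishing entries without disturbing the relation $\sum a_i f_i = 0$; this is the one spot that needs a little care, and I would phrase the statement of (ii) as "there exists a nonzero $\ell^2$ sequence" if the literal "all entries nonzero" turns out to be fragile.

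For (ii)$\Rightarrow$(i): Suppose $\{a_i\}\in\ell^2$, all $a_i\neq 0$, and $\sum_i a_i f_i = 0$. Fix any $k$; I must show $\{f_i\}_{i\neq k}$ is still a frame for $\mathcal{H}$. Since $a_k\neq 0$, we can solve $f_k = -\frac{1}{a_k}\sum_{i\neq k} a_i f_i$, so $\{f_i\}_{i\neq k}$ is complete. To upgrade completeness to the frame property I would invoke the Besselian structure: a Besselian frame has finite excess in a strong sense — here the excess is exactly $1$, since $\{a_i\}\in\ell^2$ realizes one genuine $\ell^2$-dependency and removing any single vector should leave a Riesz basis (an exact frame). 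Concretely, I expect to show $\{f_i\}_{i\neq k}$ is minimal (removing $f_\ell$ for $\ell\neq k$ destroys completeness, using that the dependency is unique up to scalar because of the Besselian property), hence an exact frame. The cleanest route is: $F$ Besselian $\Rightarrow$ the synthesis operator $\Theta_F^*$ is bounded below on $(\ker\Theta_F^*)^\perp$, and the relation $\sum a_i f_i=0$ together with "all $a_i\neq 0$" forces $\ker\Theta_F^*$ to be exactly one-dimensional (spanned by $\{a_i\}$); then removing the $k$-th coordinate intersects this kernel trivially, so the restricted synthesis operator is injective and bounded below, giving the frame inequality for $\{f_i\}_{i\neq k}$.

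The main obstacle is the second direction: controlling the lower frame bound of $\{f_i\}_{i\neq k}$ uniformly, i.e. turning the Besselian property into a quantitative statement that $\ker\Theta_F^*$ is finite-dimensional (in fact one-dimensional here). I would handle this by recalling the characterization that $F$ is Besselian iff $\Theta_F^*$ is an isomorphism from $(\ker\Theta_F^*)^\perp$ onto $\mathcal{H}$ (equivalently, $F$ is the image of a Riesz basis under a bounded invertible operator plus a controlled number of extra vectors), so that $\dim\ker\Theta_F^* = \mathcal{E}(F)$; the hypothesis in (ii) then pins this dimension down, and everything reduces to elementary linear algebra on the complement of one coordinate hyperplane in $\ell^2$. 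If that characterization is not available verbatim, the fallback is to argue by contradiction: if $\{f_i\}_{i\neq k}$ failed the lower bound, one could extract a further nontrivial $\ell^2$ null sequence supported off $k$, contradicting $a_k\neq 0$ in the (now established) uniqueness of the dependency.
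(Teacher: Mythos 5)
Your outline matches the paper's (express $f_k$ through the others in one direction, divide by $a_k$ in the other), but both directions have genuine gaps as written. In (i)$\Rightarrow$(ii) the statement demands a dependency $\sum_i a_i f_i=0$ with \emph{every} $a_i$ nonzero, and that is precisely the hard part: your dual-frame expansion only guarantees $a_k=-1\neq 0$ at the single erased index, and your proposed repair rests on the claim that $\mathcal{N}(\Theta_F^*)$ is infinite-dimensional, which is false in general (a Riesz basis together with one extra vector is a Besselian frame whose synthesis operator has one-dimensional kernel). You also cannot retreat to ``some nonzero $\ell^2$ sequence,'' because the all-nonzero form is exactly what makes (ii)$\Rightarrow$(i) work at \emph{every} index. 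The paper closes this gap combinatorially: it takes a dependency $g=\sum_{i\in\mathcal{J}}a_if_i=0$ whose support $\mathcal{J}$ is maximal; if some $m\notin\mathcal{J}$, robustness yields $h=f_m-\sum_{i\neq m}b_if_i=0$, and $g+\epsilon h=0$ with $\epsilon$ chosen to avoid the countably many values $a_i/b_i$ produces a dependency with strictly larger support, a contradiction. (A Baire-category argument --- for each $i$ the set $\{a\in\mathcal{N}(\Theta_F^*):a_i=0\}$ is a proper closed subspace, and a countable union of such cannot exhaust $\mathcal{N}(\Theta_F^*)$ --- would also rescue your perturbation idea, but some argument of this kind must be supplied.)

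In (ii)$\Rightarrow$(i), your route through ``$\ker\Theta_F^*$ is exactly one-dimensional'' is incorrect: condition (ii) asserts the existence of one all-nonzero dependency and places no bound on the kernel dimension (adjoin to a Riesz basis two distinct vectors, each an $\ell^2$-combination of the basis with all coefficients nonzero, for a Besselian counterexample with two-dimensional kernel). Fortunately none of that machinery is needed. From $f_k=-\frac{1}{a_k}\sum_{i\neq k}a_if_i$ and Cauchy--Schwarz one gets $|\langle f,f_k\rangle|^2\le |a_k|^{-2}\,\|a\|_{\ell^2}^2\sum_{i\neq k}|\langle f,f_i\rangle|^2$, hence $\sum_{i\neq k}|\langle f,f_i\rangle|^2\ge \bigl(1+|a_k|^{-2}\|a\|_{\ell^2}^2\bigr)^{-1}A\|f\|^2$, which is the lower frame bound for $\{f_i\}_{i\neq k}$; this is the same computation the paper carries out for frames of excess $m$, and it is what the paper's one-line ``$f_m=\sum_{i\neq m}-(a_i/a_m)f_i$'' is implicitly invoking.
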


\begin{proof}
    $(i) \implies (ii):$\;
    Let $\mathcal{J} \subset \left\{ 1,2,\ldots \infty  \right\}$ such that $\mathcal{J}$ is maximal for which there are non-zero $a_i$'s, $i \in \mathcal{J}$ and 
    \begin{equation}\label{eqn2}
        g = \sum \limits_{i \in \mathcal{J} } a_i f_i = 0.
    \end{equation}
    We claim that $\mathcal{J} = \left\{ 1,2,\ldots \infty  \right\}.$    Suppose not. Let $m \in \mathcal{J}^c, $ where $\mathcal{J}^c$ is the complement of $\mathcal{J}.$ By our assumption, $\{f_i\}_{i=1}^\infty$ is a robust for $1-$erasure. Then, there exists $\{b_i\}_{i \neq m}$ not all zero, such that 
    $$ f_m = \sum \limits_{i \neq m} b_i f_i $$

    So,
    \begin{equation} \label{eqn3}
     0= h=  f_m - \sum \limits_{i \neq m} b_i f_i = f_m - \sum \limits_{i \in \mathcal{J}} b_i f_i - \sum \limits_{m \neq i \in \mathcal{J}^c} b_i f_i
     \end{equation}

     Case 1: If $b_i = 0, \forall i \in \mathcal{J}.$     Then, $0= h= f_m - \sum \limits_{m \neq i \in \mathcal{J}^c} b_i f_i.$   This leads to,
     $$ 0 = g+ h = \sum\limits_{i \in \mathcal{J}}a_i f_i + f_m - \sum \limits_{m \neq i \in \mathcal{J}^c} b_i f_i.$$
     Hence, $m \in \mathcal{J},$ which gives a contradiction. Therefore, $\mathcal{J} = \left\{ 1,2,\ldots \infty  \right\}.$\\

     Case 2: At least one $b_i \neq 0,$ for some $i \in \mathcal{J}.$ Since $a_i \neq 0, \forall i \in \mathcal{J},$ we can choose $\epsilon > 0$ such thst $\epsilon \neq \frac{a_i}{b_i},\,\forall i \in \mathcal{J}.$ Note that, existence of such $\epsilon$ is guaranteed by the fact that the set of all positive real number is uncountable.    Also, $g + \epsilon h = 0.$ This can be re-written as
     $$ \sum\limits_{i \in \mathcal{J}} a_i f_i + \epsilon \left( f_m - \sum\limits_{i \in \mathcal{J}} b_i f_i - \sum\limits_{m \neq i \in \mathcal{J}^c} b_i f_i \right) = 0. $$

     This leads to,
     $$ \sum\limits_{i \in \mathcal{J}} \left( a_i - \epsilon b_i   \right) f_i  + \epsilon f_m - \epsilon \sum\limits_{m \neq i \in \mathcal{J}^c} b_i f_i  = 0$$

     Hence, $m \in \mathcal{J},$ which is not possible. Therefore, $\mathcal{J} = \left\{ 1,2,\ldots \infty  \right\}.$\\

     $(ii) \implies (i):$
     This side is trivial, as, for any $1 \leq m < \infty,$ $f_m = \sum\limits_{i \neq m} - \left( \frac{a_i}{a_m} \right)f_i.$
     
\end{proof}

The next proposition provides a comprehensive characterization of $1$-erasure robustness for Besselian frames derived from an orthonormal basis under a projection. It establishes that robustness is equivalent to three conditions: the existence of a non-zero sequence $\{a_i\}_{i=1}^\infty$ whose weighted sum of projected basis elements vanishes, or the presence of a vector in the orthogonal complement of the projection's range that maintains non-zero inner products with all basis elements. This equivalence highlights the intricate balance between projection, frame structure, and robustness, offering both geometric and analytical perspectives on the preservation of frame properties under erasures.

\begin{prop}
    Let $\{e_i\}_{i=1}^\infty$ be an Orthonormal basis for $\mathcal {H}.$ Then the following are equivalent:
     \begin{enumerate}
			\item [{\em (i)}] $\{Pe_i\}_{i=1}^\infty$ be a Besselian frame robust to $1-$erasure.
			\item [{\em (ii)}] There exists a sequence of non-zero scaler $\{a_i\} \in \ell^2$ such that  $\sum\limits_{i=1}^\infty a_i P e_i = 0.$ 
   \item [{\em (iii)}] There is a $f \in (I-P)\mathcal{H}$ such that $\langle f, e_i \rangle \neq 0, \forall i.$
		\end{enumerate}
    
\end{prop}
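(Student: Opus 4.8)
The plan is to establish the cycle $(iii) \Rightarrow (ii) \Rightarrow (i) \Rightarrow (iii)$, leveraging the previously proved Theorem~\ref{prop3point1be1robust} to obtain $(i) \Leftrightarrow (ii)$ essentially for free, and then treating $(iii)$ as the genuinely new ingredient. For $(ii) \Leftrightarrow (i)$, I would simply invoke Theorem~\ref{prop3point1be1robust} applied to the Besselian frame $F = \{Pe_i\}_{i=1}^\infty$ for the Hilbert space $P\mathcal{H}$: condition $(ii)$ here is precisely condition $(ii)$ there (a nonzero $\ell^2$ sequence annihilating the frame vectors), and condition $(i)$ here is condition $(i)$ there. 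One small point to check is that $\{Pe_i\}$ is indeed Besselian as a frame for $P\mathcal{H}$ — this follows because $\sum a_i Pe_i$ converges in $P\mathcal{H}$ iff $P(\sum a_i e_i)$ converges, and since $\{e_i\}$ is an orthonormal basis the partial sums $\sum a_i e_i$ converge iff $\{a_i\} \in \ell^2$; a brief argument is needed to pass from convergence of the projected sums to membership in $\ell^2$, but for an orthonormal basis this is transparent.

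The substantive step is the equivalence of $(iii)$ with $(ii)$. For $(iii) \Rightarrow (ii)$: given $f \in (I-P)\mathcal{H}$ with $\langle f, e_i\rangle \neq 0$ for all $i$, set $a_i := \langle f, e_i\rangle$. Since $\{e_i\}$ is an orthonormal basis, $\{a_i\} \in \ell^2$ and $\sum_i a_i e_i = f$. Applying $P$ and using $Pf = 0$ (because $f \in (I-P)\mathcal{H} = \mathcal{N}(P)$) gives $\sum_i a_i Pe_i = P f = 0$, and all $a_i \neq 0$ by hypothesis. For the converse $(ii) \Rightarrow (iii)$: given nonzero scalars $\{a_i\} \in \ell^2$ with $\sum_i a_i Pe_i = 0$, define $f := \sum_i a_i e_i$, which converges in $\mathcal{H}$ since $\{a_i\} \in \ell^2$. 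Then $Pf = \sum_i a_i Pe_i = 0$, so $f \in \mathcal{N}(P) = (I-P)\mathcal{H}$; and $\langle f, e_i\rangle = a_i \neq 0$ for all $i$ by orthonormality. This shows $f$ witnesses $(iii)$.

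The main obstacle — really the only place requiring care — is justifying the interchange of $P$ with the infinite sums, i.e. that $P(\sum_i a_i e_i) = \sum_i a_i Pe_i$ whenever the left-hand series converges. This is immediate from boundedness and continuity of $P$: if $s_N = \sum_{i=1}^N a_i e_i \to f$, then $Ps_N \to Pf$ by continuity, and $Ps_N = \sum_{i=1}^N a_i Pe_i$, so the series $\sum_i a_i Pe_i$ converges to $Pf$. The same continuity argument handles the Besselian verification in passing. No deep machinery is needed; the proof is essentially bookkeeping around the identification of the coefficient sequence with the Fourier coefficients $\langle f, e_i\rangle$ of an element of the orthonormal basis, together with the observation $(I-P)\mathcal{H} = \mathcal{N}(P)$ since $P$ is an orthogonal projection.
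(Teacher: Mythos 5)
Your proposal is correct and follows essentially the same route as the paper: the equivalence $(i)\Leftrightarrow(ii)$ is obtained by invoking Theorem~\ref{prop3point1be1robust} for the frame $\{Pe_i\}_{i=1}^\infty$, and $(ii)\Leftrightarrow(iii)$ is proved by identifying the sequence $\{a_i\}$ with the Fourier coefficients $\langle f,e_i\rangle$ of $f=\sum_i a_i e_i$ and using that $P$ annihilates $f$ exactly when $f\in(I-P)\mathcal{H}$. Your explicit remarks on the continuity of $P$ under the infinite sums and on verifying the Besselian property of $\{Pe_i\}$ are minor points the paper leaves implicit, but the argument is the same.
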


\begin{proof}
    $(i) \iff (ii):$ immediately follows from Proposition \ref{prop3point1be1robust}\\
    $(ii) \iff (iii):$ Suppose there is a non zero scaler $\left\{a_i \right\}_{i=1}^\infty \in \ell^2$ such that $\sum\limits_{i=1}^\infty a_i P e_i = 0.$ This implies that $\sum\limits_{i=1}^\infty a_i e_i = \sum\limits_{i=1}^\infty a_i (I - P) e_i.$ Note that $\sum\limits_{i=1}^\infty a_i e_i$ convergent since $\{a_i\}_{i=1}^\infty \in \ell^2.$ Taking, $f=\{a_i\}_{i=1}^\infty,$ we have $\langle f,e_i \rangle = a_i \neq 0,\,\forall i.$ \\
  Conversely, let $f= (I-P)h,$ for some $h \in \mathcal{H}$ such that $\langle (I-P)h,e_i \rangle = \langle f,e_i \rangle \neq 0.$ let us take $a_i = \langle (I-P)h,e_i \rangle \neq 0.$ Now,
    \begin{align*}
       & \;\;\sum\limits_{i=1}^\infty a_i P e_i \\&= \sum\limits_{i=1}^\infty \langle (I-P)h,e_i \rangle Pe_i \\& = \sum\limits_{i=1}^\infty \langle h,e_i \rangle Pe_i - \sum\limits_{i=1}^\infty \langle Ph,e_i \rangle Pe_i\\& = Ph -P^2 h \\&= 0
    \end{align*}
\end{proof}

Let $F=\{f_i\}_{i \in \mathcal{I}}$ be a Besselian frame for $\mathcal{H}.$ Then, there exists a Hilbert space $\mathcal{H}' \supseteq \mathcal{H}$ with an orthonormal basis $\{e_i\}_{i \in \mathcal{I}}$ such that $Pe_i = S_{F}^{-\frac{1}{2}} f_i,\;\forall i.$ This gives $S_{F}^{\frac{1}{2}}Pe_i =  f_i,\;\forall i.$ Let $\{\phi_i\}_{i \in \mathcal{J}}$ be an orthonormal basis for $\mathcal{H}.$ Then for all $j,$
\begin{align*}
    \|f_j\|^2 &= \left\langle S_{F}^{\frac{1}{2}}Pe_j, S_{F}^{\frac{1}{2}}Pe_j \right\rangle \\&= \left\langle S_{F}Pe_j, Pe_j \right\rangle \\& = \left\langle \sum \limits_{i \in \mathcal{I}} \langle Pe_j, f_i \rangle f_i, Pe_j \right\rangle \\&=  \sum \limits_{i \in \mathcal{I}} \left| \langle Pe_j, f_i \rangle \right|^2 \\&= \sum \limits_{i \in \mathcal{I}} \left| \left\langle \sum\limits_{k \in \mathcal{J}} \langle e_j, \phi_k \rangle \phi_k, f_i \right\rangle \right|^2 \\&=  \sum \limits_{i \in \mathcal{I}} \left| \sum \limits_{k \in \mathcal{J}}\langle e_j,\phi_k \rangle \langle \phi_k, f_i \rangle  \right|^2 \\&=  \sum \limits_{i \in \mathcal{I}} \left| \left\langle e_j, \sum \limits_{k \in \mathcal{J}}\langle f_i,\phi_k \rangle \phi_k \right\rangle  \right|^2 \\&=  \sum \limits_{i \in \mathcal{I}} \left| \langle e_j, f_i \rangle  \right|^2
\end{align*}
The above expression establishes a connection between the norm of a frame element and the squared inner products of the corresponding basis elements in an extended Hilbert space. It illustrates how the norm of a frame element is determined by its expansion coefficients relative to the orthonormal basis of the extended space. This insight is particularly useful in signal processing, where frames provide redundant yet stable representations of signals. Moreover, it highlights the structural role of frames in representing vectors through their projections onto an extended orthonormal basis.

Building on this idea, the following proposition establishes a fundamental equivalence,\break demonstrating that any Besselian frame can be transformed via a unitary operator into a sequence defined by the orthonormal basis coefficients. This result underscores the structural flexibility of Besselian frames and their intrinsic connection to orthonormal bases, offering a deeper \break understanding of their geometric representation in Hilbert spaces. It reinforces the idea that frames, despite their redundancy, maintain a well-defined structure that allows for stable \break reconstruction and transformation, making them valuable in both theoretical and applied settings.

\begin{prop}
   Let $F=\{f_i\}_{i =1}^\infty$ be a Besselian frame and $\{\phi_j\}_{j =1}^\infty$  be an orthonormal basis for $\mathcal{H}.$ Then, $F$ is unitarily  equivalent to $\left\{ \sum\limits_{j=1}^\infty \langle e_i, \phi_j \rangle e_j \right\}_{i=1}^\infty.$
\end{prop}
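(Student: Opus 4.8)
The plan is to build an explicit unitary transporting $F$ onto the sequence $\{v_i\}_{i=1}^{\infty}$, where $v_i:=\sum_{j=1}^{\infty}\langle e_i,\phi_j\rangle e_j$, working inside the dilation space $\mathcal{H}'\supseteq\mathcal{H}$ with its orthonormal basis $\{e_i\}_{i=1}^{\infty}$ and exploiting the identity $f_i=S_F^{1/2}Pe_i$ recorded just before the statement, where $P$ denotes the orthogonal projection of $\mathcal{H}'$ onto $\mathcal{H}$.

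First I would make sense of the vectors $v_i$. Since $\phi_j\in\mathcal{H}$, we have $\langle e_i,\phi_j\rangle=\langle Pe_i,\phi_j\rangle$, so Parseval's identity for the orthonormal basis $\{\phi_j\}_{j=1}^{\infty}$ of $\mathcal{H}$ gives $\sum_{j}|\langle e_i,\phi_j\rangle|^{2}=\|Pe_i\|^{2}=\|S_F^{-1/2}f_i\|^{2}<\infty$. Hence the coefficient sequence lies in $\ell^{2}$, the series converges in $\mathcal{H}'$, and $\|v_i\|=\|Pe_i\|$. Moreover the same computation is nothing but the expansion $Pe_i=\sum_{j}\langle e_i,\phi_j\rangle\phi_j$ of $Pe_i\in\mathcal{H}$ in the basis $\{\phi_j\}$, so $v_i$ is precisely the vector obtained from $Pe_i$ by replacing each $\phi_j$ by $e_j$.

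That substitution is implemented by a unitary. The assignment $\phi_j\mapsto e_j$ carries an orthonormal basis of $\mathcal{H}$ onto an orthonormal basis of $\mathcal{H}'$, hence extends (by linearity and continuity) to a unitary $W:\mathcal{H}\to\mathcal{H}'$, and applying $W$ term by term to $Pe_i=\sum_{j}\langle e_i,\phi_j\rangle\phi_j$ (legitimate since $W$ is bounded) gives $W(Pe_i)=v_i$. Combining with $Pe_i=S_F^{-1/2}f_i$ we obtain $v_i=WS_F^{-1/2}f_i$ for every $i$, so the operator $U:=WS_F^{-1/2}$ on $\overline{\text{span}}\{f_i\}=\mathcal{H}$ satisfies $Uf_i=v_i$. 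Since two total families in Hilbert spaces are unitarily equivalent exactly when they have the same Gram matrix, it then suffices to verify $\langle v_i,v_k\rangle=\langle f_i,f_k\rangle$ for all $i,k$.

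I expect this Gram-matrix identity to be the main obstacle. Unwinding the definitions yields $\langle v_i,v_k\rangle=\langle S_F^{-1/2}f_i,S_F^{-1/2}f_k\rangle$, so one must reconcile this quantity with $\langle f_i,f_k\rangle$; this is exactly where the Besselian hypothesis on $F$ has to be brought to bear, and the delicate point is to extract from Besselianness — possibly together with a judicious choice of the dilation space $\mathcal{H}'$ and of the basis $\{\phi_j\}$ relative to $S_F$ — enough rigidity to conclude that $WS_F^{-1/2}$ is isometric on $\overline{\text{span}}\{f_i\}$. Once isometry is established, surjectivity onto $\mathcal{H}'$ is automatic, since $\{v_i\}=\{WPe_i\}$ and $\{Pe_i\}=\{S_F^{-1/2}f_i\}$ is total in $\mathcal{H}$; this would complete the argument.
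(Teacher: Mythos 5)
Your construction is the same as the paper's: the paper also takes the dilation $\mathcal{H}'\supseteq\mathcal{H}$ with orthonormal basis $\{e_i\}$, expands $Pe_i=\sum_j\langle e_i,\phi_j\rangle\phi_j$, and applies the unitary $T:\mathcal{H}\to\mathcal{H}'$ determined by $T\phi_j=e_j$ to get $TPe_i=\sum_j\langle e_i,\phi_j\rangle e_j$. The difference is that the paper's proof simply asserts the dilation in the form $Pe_i=f_i$, so that $Tf_i=TPe_i$ finishes the argument in one line, whereas you (correctly, and consistently with the identity $Pe_i=S_F^{-1/2}f_i$ recorded in the paragraph preceding the proposition) keep track of the factor $S_F^{-1/2}$ and arrive at $v_i=WS_F^{-1/2}f_i$.

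The step you flag as ``the main obstacle'' is a genuine gap, and it cannot be closed the way you hope: $\langle v_i,v_k\rangle=\langle S_F^{-1/2}f_i,S_F^{-1/2}f_k\rangle=\langle S_F^{-1}f_i,f_k\rangle$, and requiring this to equal $\langle f_i,f_k\rangle$ for all $i,k$ forces $S_F=I$ on $\overline{\mathrm{span}}\{f_i\}=\mathcal{H}$, i.e.\ forces $F$ to be Parseval. Besselianness gives no additional rigidity here --- it is a property of the synthesis operator's kernel and says nothing about $S_F$ being the identity --- so no choice of $\mathcal{H}'$ or of $\{\phi_j\}$ will make $WS_F^{-1/2}$ isometric for a non-Parseval $F$. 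The honest resolution is that the statement, as the paper proves it, is really about the Parseval normalization: either one assumes $Pe_i=f_i$ (which is the Parseval dilation), or one concludes that $S_F^{-1/2}F$, rather than $F$ itself, is unitarily equivalent to $\bigl\{\sum_j\langle e_i,\phi_j\rangle e_j\bigr\}_i$. Your write-up correctly isolates exactly the point where the paper's own argument is silently using that normalization; to finish, you should either add the Parseval hypothesis or restate the conclusion for $S_F^{-1/2}F$, rather than search for a Besselian argument that does not exist.
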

\begin{proof}
    For a Besselian frame $F=\{f_i\}_{i =1}^\infty$ for $\mathcal{H},$ there exists a Hilbert space $\mathcal{H}' \supset \mathcal{H}$ with an Orthonormal basis $\{e_i\}_{i =1}^\infty $  such that $Pe_i = f_i,\,1\leq i \leq \infty,$ where $P$ is the orthogonal projection from $\mathcal{H}'$ onto $\mathcal{H}.$ Let $ \{\phi_j\}_{j =1}^\infty $ be an Orthonormal basis for $\mathcal{H}.$ Then, $Pe_i = \sum\limits_{j=1}^\infty \langle e_i, \phi_j \rangle \phi_j.$ Let $T$ be a unitary operator such that $T \phi_j = e_j,\;1 \leq j \leq \infty.$ Then, 
    $$Tf_i = TPe_i = \sum\limits_{j=1}^\infty \langle e_i, \phi_j \rangle T\phi_j = \sum\limits_{j=1}^\infty \langle e_i, \phi_j \rangle e_j.$$

 \end{proof}

\section{Acknowledgment}
	 The authors are grateful to the Mohapatra Family Foundation and the College of Graduate Studies of the University of Central Florida for their support during this research.

\bibliographystyle{amsplain}

\end{document}